
\documentclass[UTF8,fontset=windows,final, leqno]{siamltex}
\pagestyle{myheadings}

\setlength{\hoffset}{0.7in}
\usepackage{ctex}
\usepackage{amsmath}
\allowdisplaybreaks[4]
\usepackage{epsfig}
\usepackage{graphicx}
\usepackage{amssymb}
\usepackage{CJK}
\usepackage{color}
\usepackage[noadjust]{cite}
\usepackage{caption}

\numberwithin{equation}{section}
\newtheorem{remark}{Remark}[section]



\def\lam{{\lambda}}

\def\Ome{{\Omega}}

\def\nab{{\nabla}}
\def\vepsi{{\varepsilon}}
\def\p{{\partial}}
\def\reff#1{\eqref{#1}}
\def\norm#1#2{\Vert\,#1\,\Vert_{#2}}

\def\vepsi{\varepsilon}

\def\no{{\nonumber}}

\def\div{{\mbox{\rm div\,}}}

\def\p{{\partial}}

\def\nab{\nabla}
\def\Ome{\Omega}
\def\lam{\lambda}

\newcommand{\bRM}{\mathbf{RM}}
\newcommand{\br}{\mathbf{r}}

\def\ba{\mathbf{a}}
\def\bb{\mathbf{b}}

\def\bbf{\mathbf{f}}
\def\bu{\mathbf{u}}

\def\bv{\mathbf{v}}
\def\bw{\mathbf{w}}

\def\bg{\mathbf{g}}
\def\bn{\mathbf{n}}
\def\bH{\mathbf{H}}
\def\bV{\mathbf{V}}
\def\bL{\mathbf{L}}

\def\bV{\mathbf{V}}

\def\bX{\mathbf{X}}

\def\R{\mathbb{R}}

\def\bx{{\bf x}}

\CTEXoptions[today=old]

\begin{document}
	


\title{Well-posedness of weak solution for a nonlinear poroelasticity model\footnote{Last update: \today}}

\author{
Zhihao Ge\thanks{School of Mathematics and Statistics, Henan University, Kaifeng 475004, P.R. China ({\tt zhihaoge@henu.edu.cn}).
	The work of this author was supported by the National Natural Science Foundation of China under grant No.11971150.}
\and
Wenlong He\thanks{School of Mathematics and Statistics, Henan University, Kaifeng 475004, P.R. China.}
}

\maketitle


\setcounter{page}{1}



\begin{abstract}
 In this paper, we study the existence and uniqueness of weak solution of a nonlinear poroelasticity model widely used in many fields such as geophysics, biomechanics, civil engineering, chemical engineering, materials science, and so on. To better describe the proccess of deformation and diffusion underlying in the original model, we firstly reformulate the nonlinear poroelasticity by a multiphysics approach£¬which transforms the nonlinear fluid-solid coupling problem to a fulid-fluid coupling problem. Then, we adopt the similar technique of proving the well-posedness of nonlinear Stokes equations to prove the existence and uniqueness of weak solution of a nonlinear poroelasticity model. And we strictly prove the growth, coercivity and monotonicity of the nonlinear stress-strain relation, give the energy estimates and use Schauder's fixed point theorem to show the existence and uniqueness of weak solution of the nonlinear poroelasticity model. Besides, we prove that the weak solution of nonlinear poroelasticity model converges to the nonlinear Biot's consolidation model as the constrained specific storage coefficient trends to zero. Finally, we draw a conclusion to summary the main results of this paper.
\end{abstract}

\begin{keywords}
Nonlinear poroelasticity; Multiphysics approach; Nonlinear Stokes equations; Schauder's fixed point theorem.
\end{keywords}
\begin{AMS}
35A01, 
35B45, 
86A25, 
\end{AMS}

\pagestyle{myheadings}
\thispagestyle{plain}
\markboth{ZHIHAO GE, WENLONG HE}{WELL-POSEDNESS OF WEAK SOLUTION FOR NONLINEAR POROELASTICITY}


\section{Introduction}\label{sec-1}
In recent years, poroelasticity model is widely used in various fields such as geophysics, biomechanics, civil engineering, chemical engineering, materials science and so on, one can refer to \cite{2,3,6,7,8,10,biot,coussy04,de86}. Especially, in modern materials science, porous materials such as polymers and metal foams are of great significance in lightweight design and aircraft industry, one can refer to \cite{2,coussy04,hamley07} and so on. The poroelasticity model is classified into linear poroelasticity model and nonlinear poroelasticity model according to the linear or nonlinear constitutive relation (cf. \cite{20210820}). For linear poroelasticity, Schowalter provides the analysis of well-posedness of weak solution to a linear poroelasticity model in \cite{20210819}. Phillips and Wheeler propose and analyze a continuous-in-time linear poroelasticity model in \cite{pw07}. Besides, Feng, Ge and Li in \cite{fglarxiv,fgl14}, propose a multiphysics approach to reformulate the linear poroelasticity model to a fluid-fluid coupled system, which reveals the underlying deformation and diffusion processes of the original model. In this paper, following the idea of \cite{fgl14}, we deal with the nonlinear poroelasticity model with the constitutive relation $\tilde{\sigma}(\mathbf{u})=\mu\tilde{\varepsilon}(\mathbf{u})+\lambda tr(\tilde{\varepsilon}(\mathbf{u}))\mathbf{I}$, where the deformed Green strain tensor is $ \tilde{\varepsilon}(\bu)=\dfrac{1}{2}(\nabla\bu+\nabla^{T}\boldsymbol{u}+2\nabla^{T}\bu\nabla\bu)$. Using the Cauchy-Schwarz inequality, Korn's inequality and other inequalities (see Section \ref{sec-4}), we prove the growth, coercivity and monotonicity of $\mathcal{N}(\nabla\bu)$ (see \reff{eq210823-1}), then we give the energy estimates and use Schauder's fixed point theorem to show the existence and uniqueness of weak solution of the nonlinear poroelasticity model. Besides, we prove that the weak solution of the nonlinear poroelasticity model converges to a nonlinear Biot's consolidation model as the constrained specific storage coefficient trends to zero. To the best of our knowledge, it is the first time to prove the the existence and uniqueness of weak solution based on a multiphysics approach without any assumption on the nonlinear stress-strain relation. Moreover, we find out that the multiphysics approach is key to propose a stable numerical method for the nonlinear poroelasticity model, and we will present the main results about numerical method for the nonlinear poroelasticity model in the future work.
 
The remainder of this article is organized as follows. In Section \ref{sec-2}, we reformulate the original model based on a multiphysics approach to a fluid-fluid coupling system. In Section \ref{sec-3}, we give the definition of weak solution to the original model and the reformulated model. In Section \ref{sec-4}, we prove the growth, coercivity and monotonicity based on a multiphysics approach without any assumption on the nonlinear stress-strain relation, and we use the energy estimates and Schauder's fixed point theorem to prove  the well-posedness of weak solution of the nonlinear  poroelasticity model. Besides, we prove that the nonlinear poroelasticity model converges to the nonlinear Biot's consolidation model as the constrained specific storage coefficient trends to zero. Finally, we draw a conclusion to summary the main results of this paper.

\section{PDE model and  multiphysics approach}\label{sec-2}
In this paper, we consider the following quasi-static poroelasticity model (for the linear case, one can refer to \cite{pw07,fgl14,fglarxiv}):
\begin{alignat}{2}\label{2.6} 
	-\div\tilde{\sigma}(\bu) + \alpha \nab p &= \bbf
	&&\qquad \mbox{in } \Ome_T:=\Ome\times (0,T)\subset \mathbf{\R}^d\times (0,T),\\
	(c_0p+\alpha \div \bu)_t + \div \bv_f &=\phi &&\qquad \mbox{in } \Ome_T,\label{2.7}
\end{alignat}
where
\begin{align} 
	&\tilde{\sigma}(\mathbf{u})=\mu\tilde{\varepsilon}(\mathbf{u})+\lambda tr(\tilde{\varepsilon}(\mathbf{u}))\mathbf{I},~~~~~ \tilde{\varepsilon}(\bu)=\dfrac{1}{2}(\nabla\bu+\nabla^{T}\bu+2\nabla^{T}\bu\nabla\bu), \label{2021}\\
	&\bv_f:= -\frac{K}{\mu_f} \bigl(\nab p -\rho_f \bg \bigr). \label{2.8}
\end{align}
Here $\bu$ denotes the displacement vector of the solid and $p$ denotes the pressure of the solvent. $\mathbf{I}$ denotes the $d\times d$ identity matrix and $\tilde{\vepsi}(\bu)$ is
known as the deformed Green strain tensor. $\bbf$ is the body force. The permeability tensor $K=K(x)$ is assumed to be symmetric and uniformly positive definite in the sense that there exists positive constants $K_1$ and $K_2$ such that
$K_1|\zeta|^2\leq K(x)\zeta\cdot \zeta \leq K_2 |\zeta|^2$ for a.e. $x\in\Omega$ and $\zeta\in \mathbf{\R}^d$; the solvent viscosity $\mu_f$, Biot-Willis constant
$\alpha$, and the constrained specific storage coefficient
$c_0$. In addition, $\tilde{\sigma}(\bu)$ is called the (effective) stress tensor. $\bv_f$ is the volumetric solvent flux and (\ref{2.8}) is called the well-known Darcy's law. $ \lambda $ and $ \mu $ are
Lam\'e constants, $\widehat{\sigma}(\bu, p):=\tilde{\sigma}(\bu)-\alpha p \mathbf{I}$ is the total stress tensor. We assume that $\rho_f\not\equiv 0$, which is a realistic assumption.

To close the above system, we set the following boundary and initial conditions in this paper:
 \begin{alignat}{2} \label{2.16}
 	\widehat{\sigma}(\bu,p)\bn=\tilde{\sigma}(\bu)\bn-\alpha p \bn &= \bbf_1
 	&&\qquad \mbox{on } \p\Ome_T:=\p\Ome\times (0,T),\\
 	\bv_f\cdot\bn= -\frac{K}{\mu_f} \bigl(\nab p -\rho_f \bg \bigr)\cdot \bn
 	&=\phi_1 &&\qquad \mbox{on } \p\Ome_T, \label{2.17} \\
 	\bu=\bu_0,\qquad p&=p_0 &&\qquad \mbox{in } \Ome\times\{t=0\}. \label{2.18}
 \end{alignat}
Introduce new variables
\[
q:=\div \bu,\quad \eta:=c_0p+\alpha q,\quad \xi:=\alpha p -\lam q.
\]
Denote 
\begin{eqnarray}
	\mathcal{N}(\nabla\bu)=\tilde{\sigma}(\bu)-\lambda \div\bu~ \mathbf{I},\label{eq210823-1}
\end{eqnarray}  
then we have
\begin{eqnarray}
	\mathcal{N}(\nabla\mathbf{u})=\mu\varepsilon(\mathbf{u})+\mu \nabla^{T}\mathbf{u}\nabla\mathbf{u}+\lambda\|\nabla\mathbf{u}\|_{F}^{2}\mathbf{I}.\label{eq210823-2}
\end{eqnarray}
Due to the fact of $(\nabla^{T}\mathbf{u}\nabla\mathbf{u},rot \mathbf{v})=0,~(\|\nabla\mathbf{u}\|_{F}^{2}\mathbf{I},rot \mathbf{v})=0$, so we have
 \begin{equation*}
 	(\mathcal{N}(\nabla\mathbf{u}),\nabla\mathbf{v})=(\mathcal{N}(\nabla\mathbf{u}),\varepsilon(\mathbf{v})),
 \end{equation*}
where $\varepsilon(\bu)=\dfrac{1}{2}(\nabla^{T}\bu+\nabla\bu)$.
 
In some engineering literature, Lam\'e constant
 $\mu$ is also called the {\em shear modulus} and denoted by $G$, and
 $B:=\lam +\frac23 G$ is called the {\em bulk modulus}. $\lam,~\mu$ and $B$
 are computed from the {\em Young's modulus} $E$ and the {\em Poisson ratio}
 $\nu$ by the following formulas
 \[
 \lam=\frac{E\nu}{(1+\nu)(1-2\nu)},\qquad \mu=G=\frac{E}{2(1+\nu)}, \qquad
 B=\frac{E}{3(1-2\nu)}.
 \]

 It is easy to check that
 \begin{align}\label{2.19}
 	p=\kappa_1 \xi + \kappa_2 \eta, \qquad q=\kappa_1 \eta-\kappa_3 \xi,
 \end{align}
 where $\kappa_1= \frac{\alpha}{\alpha^2+\lam c_0},
 	 \kappa_2=\frac{\lam}{\alpha^2+\lam c_0},
 	\kappa_3=\frac{c_0}{\alpha^2+\lam c_0}$.
 	
 Then the problem \reff{2.6}-\reff{2.8} can be rewritten as
 \begin{alignat}{2} \label{2.21}
 	-\div\mathcal{N}(\nabla\bu) + \nab \xi &= \bbf &&\qquad \mbox{in } \Ome_T,\\
 	\kappa_3\xi +\div \bu &=\kappa_1\eta &&\qquad \mbox{in } \Ome_T, \label{2.22}\\
 	\eta_t - \frac{1}{\mu_f} \div[K (\nab (\kappa_1 \xi + \kappa_2 \eta)-\rho_f\bg)]&=\phi
 	&&\qquad \mbox{in } \Ome_T. \label{2.23}
 \end{alignat}
 The boundary and initial conditions \reff{2.16}-\reff{2.18} can be rewritten as
 \begin{alignat}{2} \label{2.24}
 	\tilde{\sigma}(\bu)\bn-\alpha (\kappa_1 \xi + \kappa_2 \eta) \bn &= \bbf_1
 	&&\qquad \mbox{on } \p\Ome_T:=\p\Ome\times (0,T),\\
 	-\frac{K}{\mu_f} \bigl(\nab (\kappa_1 \xi + \kappa_2 \eta) -\rho_f \bg \bigr)\cdot \bn
 	&=\phi_1 &&\qquad \mbox{on } \p\Ome_T, \label{2.25} \\
 	\bu=\bu_0,\qquad p&=p_0 &&\qquad \mbox{in } \Ome\times\{t=0\}. \label{2.26}
 \end{alignat}
\begin{remark}\label{rem210823-1}
	It is now clear that $(\bu, \xi)$ satisfies a generalized nonlinear Stokes problem for
	a given $\eta$, and $\eta$ satisfies a diffusion problem for a given $\xi$. Thus, This new formulation reveals the underlying deformation and diffusion multiphysics process which occurs in the poroelastic material.
\end{remark}
  
\section{Definition of weak solution}\label{sec-3}
In this paper, $\Omega \subset \R^d \,(d=1,2,3)$ denotes a bounded polygonal domain with the boundary
$\p\Ome$. The standard function space notation is adopted in this paper, their precise definitions can be found in \cite{bs08,cia,temam}. In particular, $(\cdot,\cdot)$ and $\langle \cdot,\cdot\rangle$ denote respectively the standard $L^2(\Ome)$ and $L^2(\p\Ome)$ inner products. For any Banach space $B$, we let $\mathbf{B}=[B]^d$, and use $\mathbf{B}^\prime$ to denote its dual space. In particular, we use $(\cdot,\cdot)_{\small\rm dual}$ 
to denote the dual product on $\bH^1(\Ome)' \times \bH^1(\Ome)$, and $\norm{\cdot}{L^p(B)}$ is a shorthand notation for
$\norm{\cdot}{L^p((0,T);B)}$.\\
We also introduce the function spaces
\begin{align*}
&L^2_0(\Omega):=\{q\in L^2(\Omega);\, (q,1)=0\}, \qquad \bX:= \bH^1(\Ome).
\end{align*}
From \cite{temam}, it is well known  that the following  inf-sup condition holds in the space $\bX\times L^2_0(\Ome)$:
\begin{align}\label{e2.0}
\sup_{\bv\in \bX}\frac{(\div \bv,\varphi)}{\norm{\bv}{H^1(\Ome)}}
\geq \alpha_0 \norm{\varphi}{L^2(\Ome)} \qquad \forall
\varphi\in L^2_0(\Ome),\quad \alpha_0>0.
\end{align}
Let
\[
\bRM:=\{\br:=\ba+\bb \times x;\, \ba, \bb, x\in \R^d\}
\]
denote the space of infinitesimal rigid motions. It is well known \cite{bs08, gra, temam} that $\bRM$ is the kernel of
the strain operator $\vepsi$, that is, $\br\in \bRM$ if and only if
$\vepsi(\br)=0$. Hence, we have
\begin{align}
\vepsi(\br)=0,\quad \div \br=0 \qquad\forall \br\in \bRM. \label{e4.100}
\end{align}
Let $\bL^2_\bot(\p\Ome)$ and $\bH^1_\bot(\Ome)$ denote respectively the subspaces of $\bL^2(\p\Ome)$ and $\bH^1(\Ome)$ which are orthogonal to $\bRM$, that is,
\begin{align*}
&\bH^1_\bot(\Ome):=\{\bv\in \bH^1(\Ome);\, (\bv,\br)=0\,\,\forall \br\in \bRM\},
\\
&\bL^2_\bot(\p\Ome):=\{\bg\in \bL^2(\p\Ome);\,\langle \bg,\br\rangle=0\,\,
\forall \br\in \bRM \}.
\end{align*}
It is well known \cite{dautray} that there exists a constant $c_1>0$ such that
\begin{eqnarray}
\inf_{\br\in \bRM}\|\bv+\br\|_{L^2(\Ome)}
\le c_1\|\vepsi(\bv)\|_{L^2(\Ome)} \qquad\forall \bv\in\bH^1(\Ome).\label{eq210823-11}
\end{eqnarray}
From \cite{fgl14}, we know that for each $\bv\in \bH^1_\bot(\Ome)$ there holds the following alternative version of the inf-sup condition
\begin{eqnarray}
\sup_{\bv\in \bH^1_\bot(\Ome)}\frac{(\div \bv,\varphi)}{\norm{\bv}{H^1(\Ome)}}
\geq \alpha_1 \norm{\varphi}{L^2(\Ome)} \qquad \forall
\varphi\in L^2_0(\Ome),\quad \alpha_1>0.\label{eq210823-6}
\end{eqnarray}

For convenience, we assume that $\bbf,~ \bbf_1,~ \phi$
and $\phi_1$ all are independent of $t$ in the remaining of the paper. We note that all the results of this paper can be easily extended to the case of time-dependent
source functions.
\begin{definition}\label{weak1}
Let $\bu_0\in\bH^1(\Ome),~ \bbf\in\bL^2(\Omega),~
\bbf_1\in \bL^2(\p\Ome),~ p_0\in L^2(\Ome),~ \phi\in L^2(\Ome)$,
and $\phi_1\in  L^2(\p\Ome)$.  Assume $c_0>0$ and
$(\bbf,\bv)+\langle \bbf_1,~ \bv \rangle =0$ for any $\bv\in \mathbf{RM}$.
Given $T > 0$, a tuple $(\bu,p)$ with
\begin{alignat*}{2}
&\bu\in L^\infty\bigl(0,T; \bH_\perp^1(\Ome)),
&&\qquad p\in L^\infty(0,T; L^2(\Omega))\cap L^2 \bigl(0,T; H^1(\Omega)\bigr), \\
&p_t, (\div\bu)_t \in L^2(0,T;H^{1}(\Ome)')
&&\qquad 
\end{alignat*}
is called a weak solution to the problem \reff{2.6}--\reff{2.18}, if there hold for almost every $t \in [0,T]$
\begin{alignat}{2}\label{2.32}
&\bigl( \mathcal{N}(\nabla\bu), \vepsi(\bv) \bigr)
+\lam\bigl(\div\bu, \div\bv \bigr)
-\alpha \bigl( p, \div \bv \bigr)  && \\
&\hskip 2in
=(\bbf, \bv)+\langle \bbf_1,\bv\rangle
&&\quad\forall \bv\in \bH^1(\Ome), \no \\
&\bigl((c_0 p +\alpha\div\bu)_t, \varphi \bigr)_{\rm dual}
+ \frac{1}{\mu_f} \bigl( K(\nab p-\rho_f\bg), \nab \varphi \bigr)
\label{2.33} \\
&\hskip 2in =\bigl(\phi,\varphi\bigr)
+\langle \phi_1,\varphi \rangle
&&\quad\forall \varphi \in H^1(\Ome), \no  \\
&\bu(0) = \bu_0,\qquad p(0)=p_0.  && \label{2.34}
\end{alignat}
\end{definition}
Similarly, we can define the weak solution to the problem \reff{2.21}-\reff{2.23} as follows:
\begin{definition}\label{weak2}
Let $\bu_0\in \bH^1(\Ome), \bbf \in \bL^2(\Omega),
\bbf_1 \in \bL^2(\p\Ome), p_0\in L^2(\Ome), \phi\in L^2(\Ome)$,
and $\phi_1\in L^2(\p\Ome)$.  Assume $c_0>0$ and
$(\bbf,\bv)+\langle \bbf_1, \bv \rangle =0$ for any $\bv\in \mathbf{RM}$.
Given $T > 0$, a $5$-tuple $(\bu,\xi,\eta,p,q)$ with
\begin{alignat*}{2}
&\bu\in L^\infty\bigl(0,T; \bH_\perp^1(\Ome)), &&\qquad
\xi\in L^\infty \bigl(0,T; L^2(\Omega)\bigr), \\
&\eta\in L^\infty\bigl(0,T; L^2(\Omega)\bigr)
\cap H^1\bigl(0,T; H^{1}(\Omega)'\bigr),
&&\qquad q\in L^\infty(0,T;L^2(\Ome)), \\
&p\in L^\infty \bigl(0,T; L^2(\Omega)\bigr) \cap L^2 \bigl(0,T; H^1(\Omega)\bigr)  &&
\end{alignat*}
is called a weak solution to the problem \reff{2.21}-\reff{2.23},
if there hold for almost every $t \in [0,T]$
\begin{alignat}{2}\label{2.35}
\bigl(\mathcal{N}(\nabla\bu), \vepsi(\bv) \bigr)-\bigl( \xi, \div \bv \bigr)
&= (\bbf, \bv)+\langle \bbf_1,\bv\rangle
&&\quad\forall \bv\in \bH^1(\Ome), \\
\kappa_3 \bigl( \xi, \varphi \bigr) +\bigl(\div\bu, \varphi \bigr)
&= \kappa_1\bigl(\eta, \varphi \bigr) &&\quad\forall \varphi \in L^2(\Ome), \label{2.36}  \\
\bigl(\eta_t, \psi \bigr)_{\rm dual}
+\frac{1}{\mu_f} \bigl(K(\nab (\kappa_1\xi +\kappa_2\eta) &-\rho_f\bg), \nab \psi \bigr) \label{2.37} \\
&= (\phi, \psi)+\langle \phi_1,\psi\rangle &&\quad\forall \psi \in H^1(\Ome) , \no  \\
p:=\kappa_1\xi +\kappa_2\eta, \qquad
&q:=\kappa_1\eta-\kappa_3\xi, && \label{2.38} \\
\eta(0)= \eta_0:&=c_0p_0+\alpha q_0,  && \label{2.39}
\end{alignat}
where $q_0:=\div \bu_0$, $u_0$ and $p_0$ are same as in Definition \reff{weak1}.
\end{definition}

\begin{remark}\label{rem-2.1}
It should be pointed out that the only reason for introducing the space $\bH_\perp^1(\Ome)$
in the above two definitions is that the boundary condition \reff{2.16} is a pure ``Neumann condition". If it is replaced by a pure Dirichlet condition or by a mixed Dirichlet-Neumann condition, there is no need
to introduce this space. Thus, from the analysis point of view, the pure Neumann condition case is the most difficult case.
\end{remark}

\section{Existence and uniqueness of weak solution}\label{sec-4}
The proof of next two lemmas about the stress-strain relation are required to obtain a well-posed weak solution of the nonlinear poroelsaticity problem.
\begin{lemma}
	There exist positive constants $ C_{1},~C_{2}$ and $C_{4}$ such that
	\begin{align}
		&\left\|\mathcal{N}(\nabla\mathbf{u}) \right\|_{L^{2}(\Omega_{T})}\leq C_{1}\left\|\varepsilon(\mathbf{u}) \right\|_{L^{2}(\Omega)},\label{2.27}\\ 
		&(\mathcal{N}(\nabla(\mathbf{u})),\varepsilon(\mathbf{u}))\geq C_{2}\left\|\varepsilon(\mathbf{u}) \right\|_{L^{2}(\Omega)}^{2},\label{2.28}\\
		&(\mathcal{N}(\nabla(\mathbf{u}))-\mathcal{N}(\nabla(\mathbf{v})),\varepsilon(\mathbf{u})-\varepsilon(\mathbf{v}))\geq C_{4}\left\|\varepsilon(\mathbf{u})-\varepsilon(\mathbf{v}) \right\|_{L^{2}(\Omega)}^{2}.\label{2.31}
	\end{align}
\end{lemma}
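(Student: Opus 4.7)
The plan is to exploit the explicit three-term decomposition
\[
\mathcal{N}(\nabla\mathbf{u}) = \mu\varepsilon(\mathbf{u}) + \mu\nabla^T\mathbf{u}\nabla\mathbf{u} + \lambda\|\nabla\mathbf{u}\|_F^2\mathbf{I}
\]
given in \reff{eq210823-2}, and to observe that the three estimates are not independent: since $\mathcal{N}(\mathbf{0})=\mathbf{0}$, the coercivity \reff{2.28} is just the monotonicity \reff{2.31} with $\mathbf{v}=\mathbf{0}$. Hence I would only need to prove \reff{2.27} and \reff{2.31}, and \reff{2.28} would follow with $C_2=C_4$.

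For the growth bound \reff{2.27}, I would estimate the three pieces of $\mathcal{N}(\nabla\mathbf{u})$ separately. The linear piece contributes $\mu\|\varepsilon(\mathbf{u})\|_{L^2}$ immediately. Each of the two quadratic pieces $\mu\nabla^T\mathbf{u}\nabla\mathbf{u}$ and $\lambda\|\nabla\mathbf{u}\|_F^2\mathbf{I}$ is dominated pointwise by $|\nabla\mathbf{u}|^2$, so its $L^2(\Omega)$ norm is controlled by $\|\nabla\mathbf{u}\|_{L^4(\Omega)}^2$. Then the Sobolev embedding $H^1(\Omega)\hookrightarrow L^4(\Omega)$ (valid for $d\le 3$) together with Korn's inequality \reff{eq210823-11} on $\bH^1_\bot(\Omega)$ dominates $\|\nabla\mathbf{u}\|_{L^4}$ by $\|\varepsilon(\mathbf{u})\|_{L^2}$, times a constant that depends on the a priori bound on $\mathbf{u}$ maintained along the Schauder iteration.

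For the monotonicity \reff{2.31}, I would use the telescoping identities
\[
\nabla^T\mathbf{u}\nabla\mathbf{u}-\nabla^T\mathbf{v}\nabla\mathbf{v} = \nabla^T\mathbf{u}\nabla(\mathbf{u}-\mathbf{v}) + \nabla^T(\mathbf{u}-\mathbf{v})\nabla\mathbf{v},
\]
\[
\|\nabla\mathbf{u}\|_F^2 - \|\nabla\mathbf{v}\|_F^2 = \nabla\mathbf{u}:\nabla(\mathbf{u}-\mathbf{v}) + \nabla(\mathbf{u}-\mathbf{v}):\nabla\mathbf{v}
\]
to expose a factor of $\nabla(\mathbf{u}-\mathbf{v})$ in each nonlinear increment. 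Testing $\mathcal{N}(\nabla\mathbf{u})-\mathcal{N}(\nabla\mathbf{v})$ against $\varepsilon(\mathbf{u}-\mathbf{v})$ then produces the good term $\mu\|\varepsilon(\mathbf{u}-\mathbf{v})\|_{L^2}^2$ plus mixed trilinear remainders in which one factor is $\nabla\mathbf{u}$ or $\nabla\mathbf{v}$, one is $\nabla(\mathbf{u}-\mathbf{v})$, and one is $\varepsilon(\mathbf{u}-\mathbf{v})$. I would control each remainder by H\"older's inequality with exponents $(4,4,2)$, the same Sobolev embedding and Korn's inequality as above, and finally Young's inequality to absorb a small fraction of the good term back. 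This leaves the desired constant $C_4>0$.

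The main obstacle is that the cubic remainders carry no intrinsic sign, so the constants $C_1,C_2,C_4$ will inevitably depend on an a priori bound on $\mathbf{u}$ (and $\mathbf{v}$) in $\bH^1_\bot(\Omega)$; positivity of $C_2,C_4$ requires this bound to be small enough relative to $\mu,\lambda$. In the Schauder fixed-point set-up of Section \ref{sec-4}, the iterates are confined to a bounded convex set from the outset, so the three inequalities will hold uniformly with constants depending only on the radius of that set, which is what makes the \emph{local} monotonicity sufficient for the subsequent well-posedness proof.
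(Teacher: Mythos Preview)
Your overall strategy is reasonable and in one respect cleaner than the paper's: the observation that \reff{2.28} is just \reff{2.31} with $\bv=\mathbf{0}$ eliminates a redundant computation that the paper carries out separately. The paper's route for \reff{2.28} and \reff{2.31} is also different in detail: rather than H\"older plus Young, it writes the cross term via the polarization identity $(a,b)=\tfrac12\bigl(\|a+b\|^2-\|a\|^2-\|b\|^2\bigr)$ with $a$ the quadratic part $\mu\nabla^T\bu\nabla\bu+\lambda\|\nabla\bu\|_F^2\mathbf{I}$ and $b=\varepsilon(\bu)$, then uses a reverse--triangle lower bound on $\|a+b\|^2$ after first estimating $\|a\|_{L^2}\le C\|\varepsilon(\bu)\|_{L^2}$. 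Both routes therefore hinge on the same core estimate, namely bounding $\|\nabla^T\bu\nabla\bu\|_{L^2}$ (and its difference analogue) by a multiple of $\|\varepsilon(\bu)\|_{L^2}$.

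That core estimate is where your argument has a genuine gap. The Sobolev embedding $H^1(\Omega)\hookrightarrow L^4(\Omega)$ controls $\|\bu\|_{L^4}$, not $\|\nabla\bu\|_{L^4}$; combined with Korn's inequality \reff{eq210823-11} it yields only $\|\nabla\bu\|_{L^2}\lesssim\|\varepsilon(\bu)\|_{L^2}$, so an a priori bound on $\bu$ in $\bH^1_\perp(\Omega)$ gives no control of $\|\nabla\bu\|_{L^4}$. The same obstruction hits the monotonicity step: after telescoping and H\"older with exponents $(4,4,2)$ you need $\|\nabla(\bu-\bv)\|_{L^4}\lesssim\|\varepsilon(\bu-\bv)\|_{L^2}$, which again does not follow from $H^1$ data. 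The paper avoids this by assuming from the outset \emph{pointwise} bounds $M'\le\|\nabla\bu\|_F\le N'$ (effectively an $L^\infty$ bound on $\nabla\bu$, together with a lower bound used to absorb constants), citing \cite{201912095}; with that hypothesis one has directly $\|\nabla^T\bu\,\nabla(\bu-\bv)\|_{L^2}\le N'\|\nabla(\bu-\bv)\|_{L^2}$ and then Korn in $L^2$ closes. Your argument can be repaired in the same way, but the required a priori hypothesis is ``$\nabla\bu$ bounded in $L^\infty$ (or at least $L^4$)'', which is materially stronger than ``$\bu$ bounded in $\bH^1_\perp$'' and should be stated explicitly.
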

\begin{proof}
	Firstly, we know that  $ \left\| \nabla\mathbf{u}\right\|_{L^{2}(\Omega)} $ is bounded from \cite{201912095}, i.e. $ M\leq\left\|\nabla\mathbf{u} \right\| _{L^{2}\Omega}\leq N $. So we can get $ M^{'}\leq\left\|\nabla\mathbf{u} \right\| _{F}\leq N^{'} $. Using the Cauchy-Schwarz inequality and Korn's inequality, we have
	\begin{align}
		&\left\| \mathcal{N}(\nabla(\mathbf{u}))\right\|_{L^{2}(\Omega)}=\left\|\mu\varepsilon(\mathbf{u})+\mu\nabla^{T}\mathbf{u}\nabla\mathbf{u}+\lambda\left\|\nabla\mathbf{u} \right\|_{F}^{2}\mathbf{I}  \right\|_{L^{2}(\Omega)}\nonumber\\
		&\leq\left\|\mu\varepsilon(\mathbf{u})\right\|_{L^{2}(\Omega)}  +\left\| \mu\nabla\mathbf{u}^{T}\nabla\mathbf{u}\right\|_{L^{2}(\Omega)}  +\left\| \lambda\left\|\nabla\mathbf{u} \right\|_{F}^{2}\mathbf{I}  \right\|_{L^{2}(\Omega)}\nonumber\\
		&\leq\mu\left\|\nabla\mathbf{u}\right\|_{L^{2}(\Omega)}+\mu\left\|\nabla\mathbf{u}\right\|_{L^{2}(\Omega)}^{2}+\lambda\left\|\nabla\mathbf{u} \right\|_{F}^{2}d\nonumber\\
		&\leq(\mu+N\mu+\lambda d \dfrac{N^{'}}{M})\left\|\nabla\mathbf{u}\right\|_{L^{2}(\Omega)}\nonumber\\
		&\leq c_{2}(\mu+N\mu+\lambda d \dfrac{N^{'}}{M})\left\|\varepsilon(\mathbf{u})\right\|_{L^{2}(\Omega)}.\label{eq210823-7}
	\end{align}
	Taking $ C_{1}=c_{2}(\mu+N\mu+\lambda d \dfrac{N^{'}}{M})$ in \reff{eq210823-7}, we see that \reff{2.27}  holds.
	
	To prove \reff{2.28}, we use the fact of $ \dfrac{1}{2}\left[ (a+b,a+b)-(a,a)-(b,b) \right]=(a,b) $ and the inequality $ \left\| x+y\right\|_{L^{2}(\Omega)}^{2}\geq(\left\| x\right\|_{L^{2}(\Omega)}-\left\| y\right\|_{L^{2}(\Omega)})^{2}\geq \left\| x\right\|_{L^{2}(\Omega)}^{2}-\left\| y\right\|_{L^{2}(\Omega)}^{2} $ if and only if $\left\|x \right\|_{L^{2}(\Omega)}\leq \left\|y\right\|_{L^{2}(\Omega)}$ to get
	\begin{align}
		&(\mathcal{N}(\nabla(\mathbf{u})),\varepsilon(\mathbf{u}))=(\mu\varepsilon(\mathbf{u})+\mu\nabla^{T}\mathbf{u}\nabla\mathbf{u}+\lambda\left\|\nabla\mathbf{u} \right\|_{F}^{2}\mathbf{I},\varepsilon(\mathbf{u}))\nonumber\\
		&=(\mu\varepsilon(\mathbf{u}),\varepsilon(\mathbf{u}))+(\mu\nabla^{T}\mathbf{u}\nabla\mathbf{u}+\lambda\left\|\nabla\mathbf{u} \right\|_{F}^{2}\mathbf{I},\varepsilon(\mathbf{u}))\nonumber\\
		&=\mu\left\|\varepsilon(\mathbf{u})\right\|_{L^{2}(\Omega)}^{2}\nonumber\\
		&~+\dfrac{1}{2}\left[ \left(\mu\nabla^{T}\mathbf{u}\nabla\mathbf{u}+\lambda\left\|\nabla\mathbf{u} \right\|_{F}^{2}\mathbf{I}+\varepsilon(\mathbf{u}), \mu\nabla^{T}\mathbf{u}\nabla\mathbf{u}+\lambda\left\|\nabla\mathbf{u} \right\|_{F}^{2}\mathbf{I}+\varepsilon(\mathbf{u})\right)\right.\nonumber\\
		&\left.~-\left(\mu\nabla^{T}\mathbf{u}\nabla\mathbf{u}+\lambda\left\|\nabla\mathbf{u} \right\|_{F}^{2}\mathbf{I},\mu\nabla^{T}\mathbf{u}\nabla\mathbf{u}+\lambda\left\|\nabla\mathbf{u} \right\|_{F}^{2}\mathbf{I} \right)-(\varepsilon(\mathbf{u}),\varepsilon(\mathbf{u}))\right] \nonumber\\
		&=\mu\left\|\varepsilon(\mathbf{u})\right\|_{L^{2}(\Omega)}^{2}+\dfrac{1}{2}\left[ \left\|\mu\nabla^{T}\mathbf{u}\nabla\mathbf{u}+\lambda\left\|\nabla\mathbf{u} \right\|_{F}^{2}\mathbf{I}+\varepsilon(\mathbf{u})\right\|_{L^{2}(\Omega)}^{2}\right.\nonumber\\
		&\left.~-\left\|\mu\nabla^{T}\mathbf{u}\nabla\mathbf{u}+\lambda\left\|\nabla\mathbf{u} \right\|_{F}^{2}\mathbf{I}\right\|_{L^{2}(\Omega)}^{2}-\left\|\varepsilon(\mathbf{u})\right\|_{L^{2}(\Omega)}^{2}\right]\nonumber\\
		&\geq\mu\left\|\varepsilon(\mathbf{u})\right\|_{L^{2}(\Omega)}^{2}+\dfrac{1}{2}\left[\left\|\mu\nabla^{T}\mathbf{u}\nabla\mathbf{u}+\lambda\left\|\nabla\mathbf{u} \right\|_{F}^{2}\mathbf{I}\right\|_{L^{2}(\Omega)}^{2}\right.\nonumber\\
		&~-\left[2c_{2}(\mu N+\dfrac{\lambda dN^{'2}}{M})-1\right] \left\|\varepsilon(\mathbf{u})\right\|_{L^{2}(\Omega)}^{2}\nonumber\\
		&\left.~-\left\|\mu\nabla^{T}\mathbf{u}\nabla\mathbf{u}+\lambda\left\|\nabla\mathbf{u} \right\|_{F}^{2}\mathbf{I}\right\|_{L^{2}(\Omega)}^{2}-\left\|\varepsilon(\mathbf{u})\right\|_{L^{2}(\Omega)}^{2} \right] \nonumber\\
		&=\left(\mu-c_{2}(\mu N+\dfrac{\lambda dN^{'2}}{M}) \right)\left\|\varepsilon(\mathbf{u})\right\|_{L^{2}(\Omega)}^{2}.\label{eq210823-9}
	\end{align}
Due to
	\begin{eqnarray*}
	&\left\|\mu\nabla^{T}\mathbf{u}\nabla\mathbf{u}+\lambda\left\|\nabla\mathbf{u} \right\|_{F}^{2}\mathbf{I}\right\|_{L^{2}(\Omega)}\leq\mu\left\|\nabla\mathbf{u}\right\|_{L^{2}(\Omega)}^{2}+\lambda dN^{'2}\\
	&\leq(\mu N+\dfrac{\lambda dN^{'2}}{M})\left\|\nabla\mathbf{u}\right\|_{L^{2}(\Omega)}\leq c_{2}(\mu N+\dfrac{\lambda dN^{'2}}{M})\left\|\varepsilon(\mathbf{u})\right\|_{L^{2}(\Omega)},
\end{eqnarray*}
and	taking $C_{2}=\mu-c_{2}(\mu N+\dfrac{\lambda dN^{'2}}{M})>0$ in \reff{eq210823-9}, we see that \reff{2.28} holds.

The proof of \reff{2.31} is similar to \reff{2.28}, in fact, we obtain
	\begin{eqnarray}
		&&(\mathcal{N}(\nabla(\mathbf{u}))-\mathcal{N}(\nabla(\mathbf{v})),\varepsilon(\mathbf{u})-\varepsilon(\mathbf{v}))\no\\
		&&=\mu(\varepsilon(\mathbf{u})-\varepsilon(\mathbf{v}),\varepsilon(\mathbf{u})-\varepsilon(\mathbf{v}))\no\\
		&&~+\left( \mu\nabla^{T}\mathbf{u}\nabla\mathbf{u}-\mu\nabla^{T}\mathbf{v}\nabla\mathbf{v}+\lambda\left\|\nabla\mathbf{u} \right\|_{F}^{2}\mathbf{I}-\lambda\left\|\nabla\mathbf{v} \right\|_{F}^{2}\mathbf{I},\varepsilon(\mathbf{u})-\varepsilon(\mathbf{v})\right) \no\\
		&&=\mu\left\|\varepsilon(\mathbf{u})-\varepsilon(\mathbf{v})\right\|_{L^{2}(\Omega)}^{2}\no\\
		&&~+\dfrac{1}{2}\left[ \left(\mu\nabla^{T}\mathbf{u}\nabla\mathbf{u}-\mu\nabla^{T}\mathbf{v}\nabla\mathbf{v}+\lambda\left\|\nabla\mathbf{u} \right\|_{F}^{2}\mathbf{I}-\lambda\left\|\nabla\mathbf{v} \right\|_{F}^{2}\mathbf{I}+\varepsilon(\mathbf{u})-\varepsilon(\mathbf{v}),\right.\right.\no\\
		&&\left.~\mu\nabla^{T}\mathbf{u}\nabla\mathbf{u}-\mu\nabla^{T}\mathbf{v}\nabla\mathbf{v}+\lambda\left\|\nabla\mathbf{u} \right\|_{F}^{2}\mathbf{I}-\lambda\left\|\nabla\mathbf{v} \right\|_{F}^{2}\mathbf{I}+\varepsilon(\mathbf{u})-\varepsilon(\mathbf{v}) \right)\no\\
		&&~-\left(\mu\nabla^{T}\mathbf{u}\nabla\mathbf{u}-\mu\nabla^{T}\mathbf{v}\nabla\mathbf{v}+\lambda\left\|\nabla\mathbf{u} \right\|_{F}^{2}\mathbf{I}-\lambda\left\|\nabla\mathbf{v} \right\|_{F}^{2}\mathbf{I},\mu\nabla^{T}\mathbf{u}\nabla\mathbf{u}-\mu\nabla^{T}\mathbf{v}\nabla\mathbf{v}\right.\no\\
		&&\left.\left.~+\lambda\left\|\nabla\mathbf{u} \right\|_{F}^{2}\mathbf{I}-\lambda\left\|\nabla\mathbf{v} \right\|_{F}^{2}\mathbf{I} \right)-(\varepsilon(\mathbf{u})-\varepsilon(\mathbf{v}),\varepsilon(\mathbf{u})-\varepsilon(\mathbf{v})) \right] \no\\
		&&=\mu\left\|\varepsilon(\mathbf{u})-\varepsilon(\mathbf{v})\right\|_{L^{2}(\Omega)}^{2}\no\\
		&&~+\dfrac{1}{2}\left[ \left\|\mu\nabla^{T}\mathbf{u}\nabla\mathbf{u}-\mu\nabla^{T}\mathbf{v}\nabla\mathbf{v}+\lambda\left\|\nabla\mathbf{u} \right\|_{F}^{2}\mathbf{I}-\lambda\left\|\nabla\mathbf{v} \right\|_{F}^{2}\mathbf{I}+\varepsilon(\mathbf{u})-\varepsilon(\mathbf{v})\right\|_{L^{2}(\Omega)}^{2}\right.\no\\
		&&~-\left\|\mu\nabla^{T}\mathbf{u}\nabla\mathbf{u}-\mu\nabla^{T}\mathbf{v}\nabla\mathbf{v}+\lambda\left\|\nabla\mathbf{u} \right\|_{F}^{2}\mathbf{I}-\lambda\left\|\nabla\mathbf{v} \right\|_{F}^{2}\mathbf{I}\right\|_{L^{2}(\Omega)}^{2}\no\\
		&&\left.~-\left\|\varepsilon(\mathbf{u})-\varepsilon(\mathbf{v})\right\|_{L^{2}(\Omega)}^{2}\right]\no\\
		&&\geq\mu\left\|\varepsilon(\mathbf{u})-\varepsilon(\mathbf{v})\right\|_{L^{2}(\Omega)}^{2}\no\\
		&&~+\dfrac{1}{2}\left[\left\|\mu\nabla^{T}\mathbf{u}\nabla\mathbf{u}-\mu\nabla^{T}\mathbf{v}\nabla\mathbf{v}+\lambda\left\|\nabla\mathbf{u} \right\|_{F}^{2}\mathbf{I}-\lambda\left\|\nabla\mathbf{v} \right\|_{F}^{2}\mathbf{I}\right\|_{L^{2}(\Omega)}^{2}\right.\no\\
		&&~-\left[2c_{2}(2\mu N+\dfrac{N^{'2}-M^{'2}}{N-M})-1 \right]\left\|\varepsilon(\mathbf{u})-\varepsilon(\mathbf{v})\right\|_{L^{2}(\Omega)}^{2}\no\\
		&&~-\left\|\mu\nabla^{T}\mathbf{u}\nabla\mathbf{u}-\mu\nabla^{T}\mathbf{v}\nabla\mathbf{v}+\lambda\left\|\nabla\mathbf{u} \right\|_{F}^{2}\mathbf{I}-\lambda\left\|\nabla\mathbf{v} \right\|_{F}^{2}\mathbf{I}\right\|_{L^{2}(\Omega)}^{2}\no\\
		&&\left.-\left\|\varepsilon(\mathbf{u})-\varepsilon(\mathbf{v})\right\|_{L^{2}(\Omega)}^{2}\right]\no\\
		&&\qquad=\left(\mu-2c_{2}(2\mu N+\dfrac{N^{'2}-M^{'2}}{N-M}) \right)\left\|\varepsilon(\mathbf{u})-\varepsilon(\mathbf{v})\right\|_{L^{2}(\Omega)}^{2}.\label{eq210823-10}    
	\end{eqnarray}
It is easy to check that
\begin{align*}
	&\left\|\mu\nabla^{T}\mathbf{u}\nabla\mathbf{u}-\mu\nabla^{T}\mathbf{v}\nabla\mathbf{v}+\lambda\left\|\nabla\mathbf{u} \right\|_{F}^{2}\mathbf{I}-\lambda\left\|\nabla\mathbf{v} \right\|_{F}^{2}\mathbf{I}\right\|_{L^{2}(\Omega)}\\
	&\leq\left\|\mu\nabla^{T}\mathbf{u}\nabla\mathbf{u}-\mu\nabla^{T}\mathbf{v}\nabla\mathbf{v}\right\|_{L^{2}(\Omega)}+\left\| \lambda\left\|\nabla\mathbf{u} \right\|_{F}^{2}\mathbf{I}-\lambda\left\|\nabla\mathbf{v} \right\|_{F}^{2}\mathbf{I}\right\|_{L^{2}(\Omega)}\\
	&=\left\|\mu\nabla^{T}\mathbf{u}\nabla\mathbf{u}-\mu\nabla^{T}\mathbf{u}\nabla\mathbf{v}+\mu\nabla^{T}\mathbf{u}\nabla\mathbf{v}-\mu\nabla^{T}\mathbf{v}\nabla\mathbf{v}\right\|_{L^{2}(\Omega)}\no\\
	&~+\left\| \lambda\left\|\nabla\mathbf{u} \right\|_{F}^{2}\mathbf{I}-\lambda\left\|\nabla\mathbf{v} \right\|_{F}^{2}\mathbf{I}\right\|_{L^{2}(\Omega)}\\
	&\leq\mu N\left\|\nabla\mathbf{u}-\nabla\mathbf{v}\right\|_{L^{2}(\Omega)}+\mu N\left\|\nabla\mathbf{u}-\nabla\mathbf{v}\right\|_{L^{2}(\Omega_{T})}+\lambda(N^{'2}-M^{'2})d\\
	&\leq c_{2}(2\mu N+\dfrac{N^{'2}-M^{'2}}{N-M})\left\|\varepsilon(\mathbf{u})-\varepsilon(\mathbf{v})\right\|_{L^{2}(\Omega)}.
\end{align*}
Taking $ C_{4}=\mu-2c_{2}(2\mu N+\dfrac{N^{'2}-M^{'2}}{N-M})>0$, then we get (\ref{2.31}). The proof is complete. 
\end{proof}

\begin{lemma}
	There exists a positive constant $C_{3}$ such that
	\begin{align}
		&\left\|\mathcal{N}(\nabla\mathbf{u})-\mathcal{N}(\nabla\mathbf{v}) \right\|_{L^{2}(\Omega)}\leq C_{3}\left\|\varepsilon(\mathbf{u})-\varepsilon(\mathbf{v}) \right\|_{L^{2}(\Omega)}.\label{2.30} 
	\end{align}
\end{lemma}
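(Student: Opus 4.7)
The plan is to exploit the decomposition \reff{eq210823-2} of $\mathcal{N}(\nabla\mathbf{u})$ into three pieces and use the triangle inequality in $L^2(\Omega)$:
\begin{align*}
\bigl\|\mathcal{N}(\nabla\mathbf{u})-\mathcal{N}(\nabla\mathbf{v})\bigr\|_{L^{2}(\Omega)}
&\le \mu\bigl\|\varepsilon(\mathbf{u})-\varepsilon(\mathbf{v})\bigr\|_{L^{2}(\Omega)} \\
&\quad +\bigl\|\mu\nabla^{T}\mathbf{u}\nabla\mathbf{u}-\mu\nabla^{T}\mathbf{v}\nabla\mathbf{v}
+\lambda\|\nabla\mathbf{u}\|_F^2\mathbf{I}-\lambda\|\nabla\mathbf{v}\|_F^2\mathbf{I}\bigr\|_{L^{2}(\Omega)}.
\end{align*}
The first term is already in the desired form with constant $\mu$, so the entire task reduces to controlling the nonlinear remainder by $\|\varepsilon(\mathbf{u})-\varepsilon(\mathbf{v})\|_{L^2(\Omega)}$.

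Fortunately, this is exactly the auxiliary inequality that appears at the end of the proof of \reff{2.31}: by the telescoping identity $\nabla^T\mathbf{u}\nabla\mathbf{u}-\nabla^T\mathbf{v}\nabla\mathbf{v}=\nabla^T\mathbf{u}\nabla(\mathbf{u}-\mathbf{v})+\nabla^T(\mathbf{u}-\mathbf{v})\nabla\mathbf{v}$, the difference of squares $\|\nabla\mathbf{u}\|_F^2-\|\nabla\mathbf{v}\|_F^2=(\|\nabla\mathbf{u}\|_F+\|\nabla\mathbf{v}\|_F)(\|\nabla\mathbf{u}\|_F-\|\nabla\mathbf{v}\|_F)$, the a priori bounds $\|\nabla\mathbf{u}\|_{L^2(\Omega)}\le N$, $\|\nabla\mathbf{u}\|_F\le N'$ (cf. \cite{201912095}), and Korn's inequality, one obtains
\begin{align*}
&\bigl\|\mu\nabla^{T}\mathbf{u}\nabla\mathbf{u}-\mu\nabla^{T}\mathbf{v}\nabla\mathbf{v}
+\lambda\|\nabla\mathbf{u}\|_F^2\mathbf{I}-\lambda\|\nabla\mathbf{v}\|_F^2\mathbf{I}\bigr\|_{L^{2}(\Omega)}\\
&\qquad\le c_{2}\Bigl(2\mu N+\tfrac{N^{\prime2}-M^{\prime2}}{N-M}\Bigr)\bigl\|\varepsilon(\mathbf{u})-\varepsilon(\mathbf{v})\bigr\|_{L^{2}(\Omega)}.
\end{align*}
I would simply cite/reuse this intermediate estimate verbatim.

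Combining the two bounds then yields \reff{2.30} with the explicit constant $C_{3}=\mu+c_{2}\bigl(2\mu N+\tfrac{N^{\prime2}-M^{\prime2}}{N-M}\bigr)$. Since every inequality is a direct application of the triangle inequality, a telescoping identity, the a priori $H^1$-bounds, and Korn's inequality, no new analytical ingredient is needed; the main obstacle is purely bookkeeping, namely keeping the three tensor-valued contributions correctly aligned and ensuring that the Frobenius-norm bounds $M'\le\|\nabla\mathbf{u}\|_F\le N'$ are applied consistently with the $L^2$-bounds $M\le\|\nabla\mathbf{u}\|_{L^2(\Omega)}\le N$ used in the preceding lemma.
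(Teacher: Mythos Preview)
Your proposal is correct and follows essentially the same route as the paper: split $\mathcal{N}(\nabla\mathbf{u})-\mathcal{N}(\nabla\mathbf{v})$ via the triangle inequality into the linear $\mu(\varepsilon(\mathbf{u})-\varepsilon(\mathbf{v}))$ piece and the nonlinear remainder, bound the latter by the telescoping/a~priori/Korn argument already worked out in the proof of \reff{2.31}, and combine to obtain $C_{3}=\mu+c_{2}\bigl(2\mu N+\tfrac{N^{\prime2}-M^{\prime2}}{N-M}\bigr)$. Your observation that the nonlinear bound can simply be reused verbatim from the previous lemma is exactly how the paper proceeds.
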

\begin{proof}
	Using the Cauchy-Schwarz inequality and Korn's inequality, we have
	\begin{align*}
		&\left\|\mathcal{N}(\nabla\mathbf{u})-\mathcal{N}(\nabla\mathbf{v}) \right\|_{L^{2}(\Omega)}\\
		&=\left\|\mu\nabla^{T}\mathbf{u}\nabla\mathbf{u}-\mu\nabla^{T}\mathbf{v}\nabla\mathbf{v}+\lambda\left\|\nabla\mathbf{u} \right\|_{F}^{2}\mathbf{I}-\lambda\left\|\nabla\mathbf{v} \right\|_{F}^{2}\mathbf{I}+\mu\varepsilon(\mathbf{u})-\mu\varepsilon(\mathbf{v})\right\|_{L^{2}(\Omega)}\\
		&\leq\left\|\mu\nabla^{T}\mathbf{u}\nabla\mathbf{u}-\mu\nabla^{T}\mathbf{v}\nabla\mathbf{v}\right\|_{L^{2}(\Omega)}+\left\| \lambda\left\|\nabla\mathbf{u} \right\|_{F}^{2}\mathbf{I}-\lambda\left\|\nabla\mathbf{v} \right\|_{F}^{2}\mathbf{I}\right\|_{L^{2}(\Omega)}\\
		&~+\left\| \mu\varepsilon(\mathbf{u})-\mu\varepsilon(\mathbf{v})\right\|_{L^{2}(\Omega)}\\
		&=\left\|\mu\nabla^{T}\mathbf{u}\nabla\mathbf{u}-\mu\nabla^{T}\mathbf{u}\nabla\mathbf{v}+\mu\nabla^{T}\mathbf{u}\nabla\mathbf{v}-\mu\nabla^{T}\mathbf{v}\nabla\mathbf{v}\right\|_{L^{2}(\Omega)}\\
		&~+\left\| \lambda\left\|\nabla\mathbf{u} \right\|_{F}^{2}\mathbf{I}-\lambda\left\|\nabla\mathbf{v} \right\|_{F}^{2}\mathbf{I}\right\|_{L^{2}(\Omega)}+\mu\left\|\varepsilon(\mathbf{u})-\varepsilon(\mathbf{v})\right\|_{L^{2}(\Omega)}\\
		&\leq\mu N\left\|\nabla\mathbf{u}-\nabla\mathbf{v}\right\|_{L^{2}(\Omega)}+\mu N\left\|\nabla\mathbf{u}-\nabla\mathbf{v}\right\|_{L^{2}(\Omega)}+\lambda(N^{'2}-M^{'2})d\\
		&~+\mu\left\|\varepsilon(\mathbf{u})-\varepsilon(\mathbf{v})\right\|_{L^{2}(\Omega)}\\
		&=(2\mu N+\dfrac{N^{'2}-M^{'2}}{M-N})\left\|\nabla\mathbf{u}-\nabla\mathbf{v}\right\|_{L^{2}(\Omega)}+\mu\left\|\varepsilon(\mathbf{u})-\varepsilon(\mathbf{v})\right\|_{L^{2}(\Omega)}\\
		&\leq c_{2}(2\mu N+\dfrac{N^{'2}-M^{'2}}{M-N})\left\|\varepsilon(\mathbf{u})-\varepsilon(\mathbf{v})\right\|_{L^{2}(\Omega)}+\mu\left\|\varepsilon(\mathbf{u})-\varepsilon(\mathbf{v})\right\|_{L^{2}(\Omega)}\\
		&=\left(\mu+c_{2}(2\mu N+\dfrac{N^{'2}-M^{'2}}{M-N})\right)\left\|\varepsilon(\mathbf{u})-\varepsilon(\mathbf{v})\right\|_{L^{2}(\Omega)}. 
	\end{align*}
	Taking $ C_{3}=\mu+c_{2}(2\mu N+\dfrac{N^{'2}-M^{'2}}{M-N})>0 $, then \reff{2.30} holds. The proof is complete.
\end{proof}
\begin{lemma}\label{lem2.1}
Every weak solution $(\bu,p)$ of the problem \reff{2.32}--\reff{2.34} satisfies
the following energy law:
\begin{align}\label{2.41}
&E(t)+\int_0^t\bigl( \mathcal{N}(\nabla\bu), \varepsilon(\bu_{t})\bigr)\, ds+\frac{1}{\mu_f} \int_0^t \bigl( K(\nab p-\rho_f\bg), \nab p\bigr)\, ds
\\
&\quad-\int_0^t \bigl(\phi, p\bigr)\, ds-\int_0^t \langle \phi_1, p \rangle\, ds =E(0)\no
\end{align}
for all $t\in [0,T]$,  where
\begin{align}\label{2.42}
E(t):&= \frac12 \Bigl[\lam \norm{\div \bu(t)}{L^2(\Ome)}^2
+c_0\norm{p(t)}{L^2(\Ome)}^2-2\bigl(\bbf,\bu(t)\bigr) -2\langle \bbf_1, \bu(t) \rangle \Bigr].
\end{align}
Moreover, there holds
\begin{align}\label{2.43}
\norm{(c_0p+\alpha \div \bu)_t}{L^2(0.T;H^{1}(\Ome)')}
&\leq\frac{1}{\mu_f} \norm{K\nab p-\rho_f \bg}{L^2(\Ome_T)}  \\
&\qquad
+ \|\phi\|_{L^2(\Ome_T)} + \|\phi_1\|_{L^2(\p\Ome_T)} < \infty. \no
\end{align}

\end{lemma}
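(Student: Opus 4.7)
The plan is to derive the energy identity by choosing the solution itself as the test function in both components of the weak formulation, then adding and integrating in time. I would take $\bv=\bu_t$ in \reff{2.32} and $\varphi=p$ in \reff{2.33}, strictly speaking at the Galerkin level and then pass to the limit (or, equivalently, argue by a standard density/Steklov-averaging argument), since the regularity $\bu\in L^\infty(0,T;\bH^1_\perp(\Ome))$ does not a priori include $\bu_t\in L^2(0,T;\bH^1(\Ome))$. Once such a justification is in place, the nonlinear term $\bigl(\mathcal{N}(\nabla\bu),\varepsilon(\bu_t)\bigr)$ appears directly, the Lam\'e term becomes $\tfrac{\lam}{2}\tfrac{d}{dt}\|\div\bu\|^2_{L^2}$, and the source terms $(\bbf,\bu_t)+\langle\bbf_1,\bu_t\rangle$ collapse to the time derivative of $(\bbf,\bu)+\langle\bbf_1,\bu\rangle$ since $\bbf,\bbf_1$ are time-independent.

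On the fluid side, taking $\varphi=p$ in \reff{2.33} gives
\begin{equation*}
\bigl((c_0p+\alpha\div\bu)_t,p\bigr)_{\rm dual}
=\tfrac{c_0}{2}\tfrac{d}{dt}\|p\|^2_{L^2(\Ome)}+\alpha\bigl((\div\bu)_t,p\bigr),
\end{equation*}
plus the Darcy term $\tfrac{1}{\mu_f}\bigl(K(\nab p-\rho_f\bg),\nab p\bigr)$ and the right-hand side $(\phi,p)+\langle\phi_1,p\rangle$. Adding this to the mechanics equation, the key algebraic cancellation is $-\alpha(p,\div\bu_t)+\alpha((\div\bu)_t,p)=0$, which is exactly the coupling that the multiphysics formulation is designed to reveal (it is a discrete integration-by-parts in time of the fluid-solid coupling term $\alpha\nab p\cdot\bu$). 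What remains is a pointwise-in-$t$ identity whose integration from $0$ to $t$ yields \reff{2.41}, with $E(t)$ defined by \reff{2.42}.

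For \reff{2.43}, I would use \reff{2.33} as a definition of $(c_0p+\alpha\div\bu)_t$ as an element of $H^1(\Ome)'$, namely
\begin{equation*}
\bigl((c_0p+\alpha\div\bu)_t,\varphi\bigr)_{\rm dual}
=-\tfrac{1}{\mu_f}\bigl(K(\nab p-\rho_f\bg),\nab\varphi\bigr)+(\phi,\varphi)+\langle\phi_1,\varphi\rangle,
\end{equation*}
then apply Cauchy--Schwarz and the trace inequality $\|\varphi\|_{L^2(\p\Ome)}\le C\|\varphi\|_{H^1(\Ome)}$ term by term. Taking the supremum over $\varphi\in H^1(\Ome)$ with $\|\varphi\|_{H^1}\le 1$ and then squaring and integrating in $t\in(0,T)$ gives the claimed bound, with finiteness inherited from the data assumptions.

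The main obstacle I anticipate is the rigorous justification of plugging $\bv=\bu_t$ into the nonlinear term $\bigl(\mathcal{N}(\nabla\bu),\varepsilon(\bv)\bigr)$: the quadratic/cubic structure of $\mathcal{N}$ in $\nab\bu$ means one must be careful that $\bigl(\mathcal{N}(\nabla\bu),\varepsilon(\bu_t)\bigr)$ is actually well defined and that one does not inadvertently claim it is a pure time derivative (it is not, because $\mathcal{N}$ is not the gradient of a convex potential on general $\nab\bu$). The clean way around this is to establish \reff{2.41} first for the Galerkin/regularized solutions used in the existence proof of Section \ref{sec-4}, where $\bu_t$ lives in the correct space, and then pass to the limit using the monotonicity estimate \reff{2.31} together with weak/strong convergence; the remaining quadratic and linear terms pass to the limit by standard lower semicontinuity and continuity arguments.
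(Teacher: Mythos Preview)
Your proposal is correct and follows essentially the same approach as the paper: test \reff{2.32} with $\bv=\bu_t$ and \reff{2.33} with $\varphi=p$, add, exploit the cancellation $-\alpha(p,\div\bu_t)+\alpha((\div\bu)_t,p)=0$ and the time-independence of $\bbf,\bbf_1$, then integrate in $t$; for \reff{2.43} the paper likewise reads it off directly from \reff{2.33} (with finiteness coming from \reff{2.41}). The paper handles the regularity issue by restricting to $\bu_t\in L^2(0,T;\bL^2(\Ome))$ and invoking Steklov averaging for the general case, which is one of the options you list; your additional remarks on Galerkin justification and the non-variational structure of $\mathcal{N}$ are more cautious than the paper's own treatment but not at odds with it.
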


\begin{proof}
We only consider the case of $\bu_t\in \bL^2((0, T);\bL^2(\Omega))$, the general case can be
converted into this case using the Steklov average technique (cf. \cite[Chapter 2]{LSU}).
Setting $\varphi=p$ in \reff{2.33} and $\bv=\bu_t$ in \reff{2.32} yields for a.e. $t\in [0, T]$
\begin{alignat}{2}\label{2.44}
&\mu \bigl(\mathcal{N}(\nabla\bu), \vepsi(\bu_t) \bigr) +\lam\bigl(\div\bu, \div\bu_t \bigr)-\alpha \bigl( p, \div \bu_t \bigr)
=(\bbf, \bu_t)+\langle \bbf_1,\bu_t\rangle,  &&\\
&\bigl((c_0 p +\alpha\div\bu)_t, p(t) \bigr)_{\small\rm dual}
+ \frac{1}{\mu_f} \bigl( K(\nab p-\rho_f\bg), \nab p \bigr)
=\bigl(\phi,p\bigr) +\langle \phi_1, p \rangle.\label{2.5} &&
\end{alignat}
Adding the above two equations and integrating the sum in $t$ over the interval $(0, s)$ for any $s\in(0, T]$, we have
\begin{eqnarray}\label{2.461}
\qquad E(s) + \frac{1}{\mu_f} \int_0^s \bigl(K (\nab p-\rho_f\bg), \nab p\bigr)\, dt
-\int_0^s \bigl(\phi, p\bigr)\, dt
-\int_0^s \langle \phi_1, p \rangle\, dt =E(0).
\end{eqnarray}
Here we have used the fact that $\bbf$ and $\bbf_1$
are independent of $t$. Hence, \reff{2.41} holds. \reff{2.43} follows immediately from \reff{2.41} and \reff{2.33}. The proof is complete.
\end{proof}

Likewise, the weak solution of \reff{2.35}--\reff{2.39} satisfy a similar energy law which is a rewritten version of \reff{2.41} in the new variables.

\begin{lemma}\label{lem2.2}
Every weak solution $(\bu,\xi,\eta)$ of the problem \reff{2.35}--\reff{2.39} satisfies
the following energy law
\begin{align}\label{2.47}
&J(t)+\int_0^t\bigl( \mathcal{N}(\nabla\bu), \varepsilon(\bu_{t})\bigr)\, ds + \frac{1}{\mu_f} \int_0^t \bigl(K(\nab p-\rho_f\bg), \nab p\bigr)\, ds
\\
&\quad-\int_0^t \bigl(\phi, p\bigr)\, ds-\int_0^t \langle \phi_1, p \rangle\, ds =J(0)\no
\end{align}
for all $t\in [0, T]$,  where
\begin{align}\label{2.48}
J(t):&= \frac12 \Bigl[\kappa_2 \norm{\eta(t)}{L^2(\Ome)}^2 +\kappa_3 \norm{\xi(t)}{L^2(\Ome)}^2-2\bigl(\bbf,\bu(t)\bigr) -2\langle \bbf_1, \bu(t) \rangle \Bigr].
\end{align}

Moreover, there holds
\begin{align}\label{2.49}
\norm{\eta_t}{L^2(0.T;H^{1}(\Ome)')}
&\leq\frac{1}{\mu_f} \norm{K\nab p-\rho_f \bg}{L^2(\Ome_T)}  \\
&\qquad
+ \|\phi\|_{L^2(\Ome_T)} + \|\phi_1\|_{L^2(\p\Ome_T)} < \infty. \no
\end{align}

\end{lemma}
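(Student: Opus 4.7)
The plan is to mirror the proof of Lemma \ref{lem2.1} but work directly with the reformulated three-field system \reff{2.35}--\reff{2.37}. As there, I would first reduce to the regular case $\bu_t \in \bL^2(0,T; \bL^2(\Ome))$ via Steklov averaging, so that all the duality pairings that appear may be treated as ordinary $L^2$ inner products.

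The heart of the derivation is a careful choice of test functions whose combination assembles the time-derivative terms into $\tfrac{d}{dt}J(t)$. I would set $\bv = \bu_t$ in \reff{2.35}, differentiate \reff{2.36} in $t$ and then set $\varphi = \xi$ in the resulting identity, and finally set $\psi = p = \kappa_1\xi + \kappa_2\eta$ in \reff{2.37}. When the three identities are summed, the cross-term $(\xi, \div\bu_t)$ cancels between the first and second, and the cross-term $\kappa_1\langle\eta_t, \xi\rangle$ cancels between the second and third, leaving
\begin{align*}
&\bigl(\mathcal{N}(\nab\bu),\varepsilon(\bu_t)\bigr) + \tfrac{1}{2}\tfrac{d}{dt}\bigl[\kappa_3\|\xi\|_{L^2(\Ome)}^2 + \kappa_2\|\eta\|_{L^2(\Ome)}^2\bigr] + \tfrac{1}{\mu_f}\bigl(K(\nab p - \rho_f\bg),\nab p\bigr) \\
&\qquad = (\bbf,\bu_t) + \langle\bbf_1,\bu_t\rangle + (\phi,p) + \langle\phi_1,p\rangle.
\end{align*}
Integrating over $(0,t)$ and exploiting the time-independence of $\bbf$ and $\bbf_1$ (so that $\int_0^t[(\bbf,\bu_s)+\langle\bbf_1,\bu_s\rangle]\,ds$ is a pure boundary contribution that folds into the $-2(\bbf,\bu)-2\langle\bbf_1,\bu\rangle$ piece of $J$) gives exactly \reff{2.47}.

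For the bound \reff{2.49}, I would test \reff{2.37} against arbitrary $\psi \in H^1(\Ome)$, apply Cauchy-Schwarz on the volume terms and the trace inequality on the boundary integral $\langle\phi_1,\psi\rangle$, take the supremum over $\psi$ with $\|\psi\|_{H^1(\Ome)}\le 1$, square, and integrate in $t$; this yields the claimed $L^2(0,T;H^1(\Ome)')$ estimate with the stated right-hand side.

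The only subtle step is justifying the time differentiation of \reff{2.36} and the pairing $(\xi_t,\xi) = \tfrac{1}{2}\tfrac{d}{dt}\|\xi\|_{L^2(\Ome)}^2$. A priori $\eta_t$ lives only in $L^2(0,T; H^1(\Ome)')$ via \reff{2.37}; the algebraic identity $\kappa_3\xi + \div\bu = \kappa_1\eta$ (holding in $L^2(\Ome)$ for a.e. $t$) therefore propagates to $\kappa_3\xi_t + (\div\bu)_t = \kappa_1\eta_t$ in that same dual space, whereas $\xi$ is only $L^\infty_t L^2_x$ and is not in $H^1(\Ome)$. This duality mismatch is the main technical hurdle, but it is identical in nature to the one handled in Lemma \ref{lem2.1} and is resolved by the same Steklov-averaging regularization, so no essentially new idea is needed beyond bookkeeping in the new variables.
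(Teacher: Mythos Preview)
Your proposal is correct and follows essentially the same approach as the paper: the paper also reduces to the case $\bu_t\in L^2(0,T;L^2(\Ome))$, takes $\bv=\bu_t$ in \reff{2.35}, differentiates \reff{2.36} in $t$ and tests with $\varphi=\xi$, and sets $\psi=p=\kappa_1\xi+\kappa_2\eta$ in \reff{2.37}, then adds and integrates in $t$. Your derivation of \reff{2.49} directly from \reff{2.37} via Cauchy--Schwarz is a minor presentational variant of the paper's ``follows immediately from \reff{2.38} and \reff{2.47}'', but the substance is the same.
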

\begin{proof}
We only consider the case of $\bu_t\in L^2(0,T; L^2(\Ome))$. Setting $\bv=\bu_t$ in \reff{2.35},
differentiating \reff{2.36} with respect to $t$ followed by taking $\varphi=\xi$, and setting
$\psi=p=\kappa_1\xi +\kappa_2\eta$ in \reff{2.37}, we have
\begin{align}
	\bigl(\mathcal{N}(\nabla\bu), \vepsi(\bu_{t}) \bigr)-\bigl( \xi, \div \bu_{t} \bigr)
	&= (\bbf, \bu_{t})+\langle \bbf_1,\bu_{t}\rangle
	&&\quad\forall \bv\in \bH^1(\Ome), \\
	\kappa_3 \bigl( \xi_{t}, \xi \bigr) +\bigl(\div\bu_{t}, \xi \bigr)
	&= \kappa_1\bigl(\eta_{t}, \xi \bigr) &&\quad\forall \xi \in L^2(\Ome),   \\
	\bigl(\eta_t, p \bigr)_{\rm dual}
	+\frac{1}{\mu_f} \bigl(K(\nab (\kappa_1\xi +\kappa_2\eta) &-\rho_f\bg), \nab p \bigr)  \\
	&= (\phi, p)+\langle \phi_1,p\rangle &&\quad\forall \psi \in H^1(\Ome) . \no  
	\end{align}
 
 Adding the resulting equations and integrating in
$t$, we see that \reff{2.47} holds. The inequality \reff{2.49} follows immediately from
\reff{2.38} and \reff{2.47}. The proof is complete.
\end{proof}

The above energy law immediately implies the following solution estimates.

\begin{lemma}\label{estimates}
There exists a positive constant
$ \acute{C}_1=\acute{C}_1\bigl(\|\bu_0\|_{H^1(\Ome)}, \|p_0\|_{L^2(\Ome)},$
$\|\bbf\|_{L^2(\Ome)},\|\bbf_1\|_{L^2(\p \Ome)},\|\phi\|_{L^2(\Ome)}, \|\phi_1\|_{L^2(\p\Ome)} \bigr)$
such that
\begin{align}\label{2.53}
&\sqrt{C_{2}}\|\varepsilon(\bu)\|_{L^\infty(0,T;L^2(\Ome))}
+\sqrt{\kappa_2} \|\eta\|_{L^\infty(0,T;L^2(\Ome))} \\
&\qquad
+\sqrt{\kappa_3} \|\xi\|_{L^\infty(0,T;L^2(\Ome))}
+\sqrt{\frac{K_1}{\mu_f}} \|\nab p \|_{L^2(0,T;L^2(\Ome))} \leq \acute{C}_1, \no \\
&\|\bu\|_{L^\infty(0,T;L^2(\Ome))}\leq \acute{C}_1, \quad
\|p\|_{L^\infty(0,T;L^2(\Ome))} \leq \acute{C}_1 \bigl( \kappa_2^{\frac12} + \kappa_1 \kappa_3^{-\frac12}
\bigr), \label{2.54} \\
&\|p\|_{L^2(0,T; L^2(\Ome))} \leq \acute{C}_1,~ \quad
\|\xi\|_{L^2(0,T;L^2(\Ome))} \leq \acute{C}_1\kappa_1^{-1} \bigl(1+ \kappa_2^{\frac12} \bigr).
\label{2.55}
\end{align}
\end{lemma}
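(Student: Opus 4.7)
The plan is to deduce every estimate in \reff{2.53}--\reff{2.55} from the energy law \reff{2.47} in Lemma \ref{lem2.2}, combined with the coercivity \reff{2.28}, Korn's inequality on $\bH_\bot^1$, and the algebraic relations \reff{2.38}. Starting from \reff{2.47}, I would first bound the Darcy integral from below using the positive-definiteness of $K$: $(K(\nab p-\rho_f\bg),\nab p) \geq K_1\|\nab p\|_{L^2(\Ome)}^2 - C\rho_f|\bg|\,\|\nab p\|_{L^2(\Ome)}$, then apply Young's inequality to absorb the cross-term into half of $K_1\|\nab p\|^2$. The forcing integrals $\int_0^t(\phi,p)\,ds+\int_0^t\langle\phi_1,p\rangle\,ds$ are controlled by Cauchy--Schwarz, a trace inequality, and a Poincar\'e estimate (splitting $p$ into its mean and zero-mean parts, the mean being recovered from $p=\kappa_1\xi+\kappa_2\eta$); Young's again absorbs the resulting $\|\nab p\|^2$ piece into the Darcy term. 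The body-force terms $-2(\bbf,\bu(t))-2\langle\bbf_1,\bu(t)\rangle$ appearing in $J(t)$ are bounded using \reff{eq210823-11} and absorbed into $\tfrac12 C_2\|\varepsilon(\bu(t))\|_{L^2}^2$ via Young's.

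The crucial step is to handle the nonlinear stress integral $\int_0^t(\mathcal{N}(\nab\bu),\varepsilon(\bu_t))\,ds$. Using the explicit decomposition \reff{eq210823-2}, I would argue that this integral equals $\Phi(\bu(t))-\Phi(\bu_0)$ for the Saint-Venant--Kirchhoff-type strain-energy potential $\Phi$ whose Gateaux derivative along $\varepsilon(\bv)$ is $(\mathcal{N}(\nab\bu),\varepsilon(\bv))$; term-by-term this reduces to identifying exact time-derivatives such as $\tfrac{d}{dt}\tfrac{\mu}{2}\|\varepsilon(\bu)\|_{L^2}^2$ and $\tfrac{d}{dt}\tfrac{\lambda}{4}\|\nab\bu\|_F^4$ together with a cubic contribution of the form $\tfrac{\mu}{3}\tfrac{d}{dt}(\nab^T\bu\nab\bu,\nab\bu)$. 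The coercivity \reff{2.28}, applied pointwise in time, then ensures $\Phi(\bu(t))\ge \tfrac{C_2}{2}\|\varepsilon(\bu(t))\|_{L^2}^2$ up to initial-data-dependent constants. Combining all estimates, the resulting integral inequality carries no Gronwall kernel on the right, so one reads off \reff{2.53} directly.

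The estimates \reff{2.54}--\reff{2.55} then follow as algebraic consequences. Korn's inequality on $\bH_\bot^1$ converts the $\varepsilon(\bu)$-bound into $\|\bu\|_{L^\infty(0,T;L^2(\Ome))}\le \acute{C}_1$. From $p=\kappa_1\xi+\kappa_2\eta$ and the triangle inequality,
\[
\|p\|_{L^\infty(0,T;L^2(\Ome))}\le \kappa_1\|\xi\|_{L^\infty(0,T;L^2(\Ome))}+\kappa_2\|\eta\|_{L^\infty(0,T;L^2(\Ome))}\le \acute{C}_1\bigl(\kappa_1\kappa_3^{-1/2}+\kappa_2^{1/2}\bigr).
\]
The $L^2(0,T;L^2(\Ome))$-bound on $p$ is obtained by combining the already-controlled $\|\nab p\|_{L^2(0,T;L^2(\Ome))}$ with a Poincar\'e inequality (the mean being handled via \reff{2.38}); and $\xi=\kappa_1^{-1}(p-\kappa_2\eta)$ together with these bounds yields $\|\xi\|_{L^2(0,T;L^2(\Ome))}\le \acute{C}_1\kappa_1^{-1}(1+\kappa_2^{1/2})$.

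The main obstacle is the first bullet of the second paragraph: rigorously identifying a potential $\Phi$ so that $\int_0^t(\mathcal{N}(\nab\bu),\varepsilon(\bu_t))\,ds=\Phi(\bu(t))-\Phi(\bu_0)$ with $\Phi(\bu)\gtrsim C_2\|\varepsilon(\bu)\|_{L^2}^2$. Without this, the coercivity \reff{2.28} only acts on $(\mathcal{N}(\nab\bu),\varepsilon(\bu))$ and cannot directly be converted into a pointwise-in-$t$ estimate on $\|\varepsilon(\bu(t))\|_{L^2}$. Once the potential structure (or, equivalently, the exact time-derivative decomposition of the cubic/quartic terms in \reff{eq210823-2}) is confirmed, the remaining manipulations are standard Cauchy--Schwarz, Young, Korn, trace, and Poincar\'e applications.
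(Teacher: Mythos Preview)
Your plan matches the paper's closely: both derive the key energy identity by testing \reff{2.35}--\reff{2.37} with $\bu_t$, $\xi$ and $p$ respectively (the paper in fact re-derives exactly this inside its proof rather than merely citing Lemma~\ref{lem2.2}), and both obtain \reff{2.54}--\reff{2.55} from \reff{2.53} via the relation $p=\kappa_1\xi+\kappa_2\eta$, Poincar\'e, Korn, and the mean-value identity \reff{2.70}. The one substantive divergence is precisely the step you flag as the main obstacle. The paper does not construct a potential $\Phi$; it simply writes ``Using \reff{2.28}'' and replaces $\int_0^t(\mathcal{N}(\nabla\bu),\varepsilon(\bu_t))\,ds$ by $C_2\int_0^t(\varepsilon(\bu),\varepsilon(\bu_t))\,ds=\tfrac{C_2}{2}\|\varepsilon(\bu(t))\|_{L^2(\Ome)}^2-\tfrac{C_2}{2}\|\varepsilon(\bu_0)\|_{L^2(\Ome)}^2$. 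Since \reff{2.28} only bounds $(\mathcal{N}(\nabla\bu),\varepsilon(\bu))$ from below --- not $(\mathcal{N}(\nabla\bu),\varepsilon(\bu_t))$ --- that substitution is not a consequence of \reff{2.28} as stated; your instinct to look for a genuine time-derivative/potential structure behind the Saint-Venant--Kirchhoff-type nonlinearity is the more careful route, though the paper itself does not pursue it.
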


\begin{proof}
Taking $\psi=p$ in \reff{2.37} and integrating from $0$ to $t$, we have
\begin{align}\label{2.56}
 &\int_0^t\bigl(\eta_t, p \bigr)_{\rm dual}ds
+\int_0^t\frac{1}{\mu_f} \bigl(K(\nab (\kappa_1\xi +\kappa_2\eta) -\rho_f\bg), \nab p \bigr)ds\\
&= \int_0^t[(\phi, p)+\langle \phi_1,p\rangle]ds.\no
\end{align}
Taking $\bv=\bu_{t}$ in \reff{2.35} and $\varphi=\xi$ in\reff{2.36}, we have
\begin{align}\label{2.57}
 \int_0^t\bigl(\eta_t, p \bigr)_{\rm dual}ds &=\int_0^t\bigl(\eta_t, \kappa_1\xi +\kappa_2\eta \bigr)_{\rm dual}ds \\
   &=\int_0^t\bigl(\eta_t, \kappa_1\xi\bigr)_{\rm dual}ds +\int_0^t\bigl(\eta_t, \kappa_2\eta \bigr)_{\rm dual}ds\no\\
   &=\frac{1}{2}\kappa_2 (\norm{\eta(t)}{L^2(\Ome)}^2-\norm{\eta(0)}{L^2(\Ome)}^2)+ \int_0^t\bigl(\eta_t, \kappa_1\xi\bigr)_{\rm dual}ds,\no
\end{align}
\begin{eqnarray}\label{2.58}
 &&\int_0^t\bigl(\eta_t, \kappa_1\xi\bigr)_{\rm dual}ds= \int_0^t\bigl(q_{t}+\kappa_3\xi_{t}, \xi\bigr)_{\rm dual}ds \\
  &&= \int_0^t\bigl(\div\bu_{t}, \xi\bigr)_{\rm dual}ds+\frac{1}{2}\kappa_3\bigl( \norm{\xi(t)}{L^2(\Ome)}^2-\norm{\xi(0)}{L^2(\Ome)}^2\bigr)\no\\
   &&=\int_0^t[\bigl(\mathcal{N}(\nabla\bu), \vepsi(\bu_{t}) \bigr)- (\bbf, \bu_{t})-\langle \bbf_1,\bu_{t}\rangle]ds\no\\
   &&~+\frac{1}{2}\kappa_3\bigl( \norm{\xi(t)}{L^2(\Ome)}^2-\norm{\xi(0)}{L^2(\Ome)}^2\bigr).\no
\end{eqnarray}
Substituting \reff{2.57} and \reff{2.58} into \reff{2.56}, we get
\begin{align*}
&\int_0^t\bigl(\mathcal{N}(\nabla\bu), \vepsi(\bu_{t}) \bigr)ds + \frac12 \Bigl[\kappa_2 \norm{\eta(t)}{L^2(\Ome)}^2 +\kappa_3 \norm{\xi(t)}{L^2(\Ome)}^2]\\
&~+\frac{1}{\mu_f} \int_0^t \bigl(K(\nab p-\rho_f\bg), \nab p\bigr)\, ds \\
&= \frac12 \Bigl[\kappa_2 \norm{\eta(0)}{L^2(\Ome)}^2 +\kappa_3 \norm{\xi(0)}{L^2(\Ome)}^2 +2\bigl(\bbf,\bu(t)-\bu(0)\bigr) +2\langle \bbf_1, \bu(t)-\bu(0) \rangle \Bigr]\\
&~+\int_0^t \bigl(\phi, p\bigr)\, ds+\int_0^t \langle \phi_1, p \rangle\, ds.
\end{align*}
Using \reff{2.28}, we have 
\begin{align*}
	&C_{2}\int_0^t\bigl( \vepsi(\bu, \vepsi(\bu_{t}) \bigr)ds + \frac12 \Bigl[\kappa_2 \norm{\eta(t)}{L^2(\Ome)}^2 +\kappa_3 \norm{\xi(t)}{L^2(\Ome)}^2]\\
	&~+\frac{1}{\mu_f} \int_0^t \bigl(K(\nab p-\rho_f\bg), \nab p\bigr)\, ds \\
	&\leq\frac12 \Bigl[\kappa_2 \norm{\eta(0)}{L^2(\Ome)}^2 +\kappa_3 \norm{\xi(0)}{L^2(\Ome)}^2 +2\bigl(\bbf,\bu(t)-\bu(0)\bigr) +2\langle \bbf_1, \bu{t}-\bu(0) \rangle \Bigr]\\
	&~+\int_0^t \bigl(\phi, p\bigr)\, ds+\int_0^t \langle \phi_1, p \rangle\, ds.
\end{align*}
Hence, \reff{2.53} holds. It's easy to check that \reff{2.54} holds from \reff{2.53} and the relation $p=\kappa_1\xi +\kappa_2\eta$. We note that \reff{2.55} follows from \reff{2.53}, \reff{eq210823-11}, the Poincar$\acute{e}$ inequality and \reff{2.70} below, and the relation $p=\kappa_1\xi +\kappa_2\eta$. The proof is complete.
\end{proof}
\begin{theorem}\label{smooth}
Suppose that $\bu_0$ and $p_0$ are sufficiently smooth, then
there exist positive constants~ $ \acute{C}_2=\acute{C}_2\bigl(\acute{C}_1,\|\nab p_0\|_{L^2(\Ome)} \bigr)$
and~ $ \acute{C}_3=\acute{C}_3\bigl(\acute{C}_1,\acute{C}_2, \|\bu_0\|_{H^2(\Ome)},\|p_0\|_{H^2(\Ome)} \bigr)$ such that
\begin{align}\label{2.59}
&\sqrt{C_{2}}\|\varepsilon(\bu_{t})\|_{L^2(0,T;L^2(\Ome))}
+\sqrt{\kappa_2} \|\eta_t\|_{L^2(0,T;L^2(\Ome))} \\
&\qquad
+\sqrt{\kappa_3} \|\xi_t\|_{L^2(0,T;L^2(\Ome))}
+\sqrt{\frac{K_1}{\mu_f}} \|\nab p \|_{L^\infty(0,T;L^2(\Ome))} \leq \acute{C}_2, \no \\
&\sqrt{C_{2}}\|\varepsilon(\bu_{t})\|_{L^\infty(0,T;L^2(\Ome))}
+\sqrt{\kappa_2} \|\eta_t\|_{L^\infty(0,T;L^2(\Ome))}\label{2.60} \\
&\qquad
+\sqrt{\kappa_3} \|\xi_t\|_{L^\infty(0,T;L^2(\Ome))}
+\sqrt{\frac{K_1}{\mu_f}} \|\nab p_t \|_{L^2(0,T;L^2(\Ome))} \leq \acute{C}_3, \no \\
&\|\eta_{tt}\|_{L^2(H^{1}(\Ome)')} \leq \sqrt{\frac{K_2}{\mu_f}}\acute{C}_3. \label{2.61}
\end{align}
\end{theorem}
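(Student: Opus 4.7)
The plan is to obtain the three bounds \reff{2.59}, \reff{2.60}, \reff{2.61} in sequence by testing the reformulated weak equations with time-derivative quantities and exploiting an infinitesimal form of the monotonicity \reff{2.31}. Specifically, setting $\bv = \bu - \tau \bw$ in \reff{2.31}, dividing by $\tau^2$, and letting $\tau \to 0^+$ produces
\begin{align*}
(D\mathcal{N}(\nab\bu)\nab\bw,\varepsilon(\bw)) \geq C_4 \|\varepsilon(\bw)\|_{L^2(\Ome)}^2 \qquad \forall\, \bw \in \bH^1(\Ome),
\end{align*}
which is the structural coercivity underlying the energy estimates on the $t$-differentiated system.

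For \reff{2.59}, I would test \reff{2.37} with $\psi = p_t$ instead of $\psi = p$, so that the diffusion term contributes $\frac{1}{2\mu_f}\frac{d}{dt}(K\nab p,\nab p)$, whose integration in time gives the $L^\infty(0,T;L^2)$ control on $\nab p$. The remaining cross term $(\eta_t,p_t) = \kappa_1(\eta_t,\xi_t) + \kappa_2\|\eta_t\|^2$ is paired with the identity obtained by differentiating \reff{2.35} in $t$ and testing with $\bv = \bu_t$ (the infinitesimal monotonicity bounds the resulting term $(D\mathcal{N}(\nab\bu)\nab\bu_t,\varepsilon(\bu_t))$ from below by $C_2\|\varepsilon(\bu_t)\|^2$), and with the identity obtained by differentiating \reff{2.36} and testing with $\varphi = \xi_t$ (which yields $\kappa_3\|\xi_t\|^2 + (\div\bu_t,\xi_t) = \kappa_1(\eta_t,\xi_t)$). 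Adding the three identities and integrating in time, the time-independence of $\bbf,\bbf_1,\phi,\phi_1$ permits a clean integration by parts on the source side, while the initial diffusion energy supplies the $\|\nab p_0\|_{L^2(\Ome)}$ dependence in $\acute{C}_2$.

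For \reff{2.60}, the plan is to repeat the argument of Lemma \ref{lem2.2} on the system obtained by formally differentiating \reff{2.35}--\reff{2.37} once in $t$: this differentiated system has the same algebraic structure as the original, with $D\mathcal{N}(\nab\bu)$ playing the role of a symmetric coercive elastic operator, so Lemma \ref{lem2.2}'s energy identity goes through on $(\bu_t,\xi_t,\eta_t,p_t)$ and delivers the $L^\infty(0,T;L^2)$ bounds on $\varepsilon(\bu_t),\eta_t,\xi_t$ together with the $L^2(0,T;L^2)$ bound on $\nab p_t$. The additional curvature terms of the form $(D^2\mathcal{N}(\nab\bu)(\nab\bu_t,\nab\bu_t),\cdot)$ that arise from differentiating the nonlinearity are strictly lower order in $\bu_t$ and are absorbed by Gronwall's inequality together with the already-controlled $L^\infty(0,T;L^2)$ norm of $\varepsilon(\bu)$ from Lemma \ref{estimates}. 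The initial data $\bu_t(0), p_t(0)$ are read off from \reff{2.6}--\reff{2.7} evaluated at $t=0$, which is the source of the $\|\bu_0\|_{H^2(\Ome)}$ and $\|p_0\|_{H^2(\Ome)}$ dependence of $\acute{C}_3$.

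Finally, \reff{2.61} follows immediately from the $t$-differentiated \reff{2.37}, which reads $(\eta_{tt},\psi)_{\rm dual} = -\frac{1}{\mu_f}(K\nab p_t,\nab\psi)$ for every $\psi \in H^1(\Ome)$; duality together with the upper bound $K \leq K_2 \mathbf{I}$ gives $\|\eta_{tt}\|_{H^1(\Ome)'} \leq (K_2/\mu_f)\|\nab p_t\|_{L^2(\Ome)}$, so taking $L^2$ in time and plugging in \reff{2.60} yields the claim. The principal technical obstacle throughout is the interaction between time differentiation and the nonlinearity $\mathcal{N}$: both \reff{2.59} and \reff{2.60} rest on the infinitesimal monotonicity displayed above, and the Gronwall absorption of the second-order nonlinear correction that appears when the system is differentiated in $t$ is the delicate step for \reff{2.60}.
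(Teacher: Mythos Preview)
Your proposal is correct and follows essentially the same route as the paper: for \reff{2.59} the paper also differentiates \reff{2.35}--\reff{2.36} in $t$, tests with $\bu_t$ and $\xi_t$, tests \reff{2.37} with $\psi=p_t$, and adds; for \reff{2.60} it likewise differentiates \reff{2.35} once and tests with $\bu_{tt}$, differentiates \reff{2.36} twice and tests with $\xi_t$, and differentiates \reff{2.37} once and tests with $p_t$---exactly your ``repeat Lemma~\ref{lem2.2} on the $t$-differentiated system''; and \reff{2.61} is handled identically by duality on the differentiated \reff{2.37}. If anything, you are more explicit than the paper about the infinitesimal monotonicity needed to control $(\partial_t\mathcal{N}(\nabla\bu),\varepsilon(\bu_t))$ and about the second-order curvature remainder in \reff{2.60}, which the paper's argument passes over without comment.
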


\begin{proof}
Differentiating \reff{2.35} and \reff{2.36} with respect to $t$, taking $\bv=\bu_t$
and $\varphi=\xi_t$ in \reff{2.35} and \reff{2.36} respectively, and adding the
resulting equations, we have 
\begin{align}\label{2.62}
\bigl(\mathcal{N}_{t}(\nabla\bu),\varepsilon(\bu_{t})\bigr) = \bigl(q_t,\xi_t \bigr)
=\kappa_1 \bigl(\eta_t,\xi_t\bigr) -\kappa_3\|\xi_t\|_{L^2(\Ome)}^2.
\end{align}
Setting $\psi=p_t=\kappa_1\xi_t + \kappa_2\eta_t$ in \reff{2.37}, we get 
\begin{align}\label{2.63}
\kappa_1 \bigl(\eta_t,\xi_t \bigr) + \kappa_2\|\eta_t\|_{L^2(\Ome)}^2
+\frac{K}{2\mu_f} \frac{d}{dt} \|\nab p-\rho_f\bg\|_{L^2(\Ome)}^2
=\frac{d}{dt}\Bigl[ (\phi,p) +\langle \phi_1, p\rangle \Bigr].
\end{align}
Adding \reff{2.62} and \reff{2.63} and integrating in $t$ we get for $t\in[0,T]$, we have
\begin{align*}
&\frac{K}{2\mu_f} \|\nab p(t)-\rho_f\bg\|_{L^2(\Ome)}^2
+\int_0^t \Bigl[ \bigl(\mathcal{N}_{t}(\nabla\bu),\varepsilon(\bu_{t})\bigr)
+ \kappa_2\|\eta_t\|_{L^2(\Ome)}^2 + \kappa_3\|\xi_t\|_{L^2(\Ome)}^2\Bigr]\,ds \\
&\hskip 0.65in
=\frac{K}{2\mu_f} \|\nab p_0-\rho_f\bg\|_{L^2(\Ome)}^2
+ (\phi,p(t)-p_0) +\langle \phi_1, p(t)-p_0 \rangle, \no
\end{align*}
which implies that \reff{2.59} holds. 

To show \reff{2.60}, first differentiating \reff{2.35} one time with respect to $t$ and setting
$\bv=\bu_{tt}$, differentiating \reff{2.36} twice with respect to $t$ and setting
$\varphi=\xi_t$, and adding the resulting equations, we get
\begin{align}\label{2.64}
\bigl(\mathcal{N}_{t}(\nabla\bu),\varepsilon(\bu_{tt})\bigr) = \bigl(q_{tt},\xi_t \bigr)
=\kappa_1\bigl(\eta_{tt},\xi_t\bigr) -\frac{\kappa_3}{2}\frac{d}{dt}\|\xi_t\|_{L^2(\Ome)}^2.
\end{align}

Secondly, differentiating \reff{2.37} with respect $t$ one time and taking
$\psi=p_t=\kappa_1\xi_t + \kappa_2\eta_t$, we get
\begin{align}\label{2.65}
\kappa_1 \bigl(\eta_{tt},\xi_t \bigr) + \frac{\kappa_2}{2} \frac{d}{dt} \|\eta_t\|_{L^2(\Ome)}^2
+\frac{K}{\mu_f} \|\nab p_t\|_{L^2(\Ome)}^2 =0.
\end{align}
Finally, adding \reff{2.64}-\reff{2.65} and integrating in $t$, we obtain
\begin{align}\label{2.66}
&2\int_0^t\bigl(\mathcal{N}_{t}(\nabla\bu),\varepsilon(\bu_{tt})\bigr)\,ds +\kappa_2 \|\eta_t(t)\|_{L^2(\Ome)}^2
+\kappa_3 \|\xi_t(t)\|_{L^2(\Ome)}^2 \\
&\hskip 0.6in+ \frac{2K}{\mu_f} \int_0^t \|\nab p_t\|_{L^2(\Ome)}^2\,ds
=\kappa_2 \|\eta_t(0)\|_{L^2(\Ome)}^2
+\kappa_3 \|\xi_t(0)\|_{L^2(\Ome)}^2,\no
\end{align}
which implies that \reff{2.60} holds. \reff{2.61} follows immediately from the following inequality
\begin{align*}
\bigl( \eta_{tt}, \psi \bigr) = -\frac{1}{\mu_f} \bigl( K\nab p_{t}, \nab \psi\bigr)
\leq \frac{K}{\mu_f} \|\nab p_{t}\|_{L^2(\Ome)} \|\nab \psi\|_{L^2(\Ome)},
\end{align*}
\reff{2.60} and the definition of the $H^{1}(\Omega)'$-norm. The proof is complete.
\end{proof}
\begin{remark}
The above estimates require $p_0\in H^1(\Ome),~
\bu_t(0)\in \bL^2(\Ome),~ \eta_t(0)\in L^2(\Ome)$ and $\xi_t(0)\in L^2(\Ome)$. The values
of $\bu_t(0),~ \eta_t(0)$ and $\xi_t(0)$ can be computed using the PDEs as follows. It follows from \reff{2.23} that $\eta_t(0)$ satisfies
\begin{align*}
\eta_t(0)= \phi + \frac{1}{\mu_f} \div[K (\nab p_0-\rho_f\bg)].
\end{align*}
Hence, $\eta_t(0)\in L^2(\Ome)$ provided that $p_0\in H^2(\Ome)$. To find $\bu_t(0)$ and $\xi_t(0)$, differentiating \reff{2.21} and \reff{2.22} with respect to $t$ and setting $t=0$, we get
\begin{alignat*}{2}
-\div\mathcal{N}\bigl(\nabla\bu_t(0)\bigr) + \nab \xi_t(0) &=0 &&\qquad \mbox{\rm in } \Ome,\\
\kappa_3\xi_t(0) +\div \bu_t(0) &=\kappa_1\eta_t(0)  &&\qquad \mbox{\rm in } \Ome.
\end{alignat*}
Hence, $\bu_t(0)$ and $\xi_t(0)$ can be determined by solving the above generalized Stokes problem.
\end{remark}

The next lemma shows that the weak solution of the problem \reff{2.35}-\reff{2.39} preserves some ``invariant" quantities, it turns out that these ``invariant" quantities play a vital role in the proof of existence and uniqueness of the weak solution to the reformulated fluid-fluid coupling system.

\begin{lemma}\label{lem2.9}
Every weak solution $(\bu,\xi,\eta,p, q)$ to the problem \reff{2.35}-\reff{2.39} satisfies
the following energy laws
\begin{align}\label{2.67}
&C_\eta(t):=\bigl(\eta(\cdot, t),1\bigr)
=\bigl(\eta_0,1\bigr) + \bigl[ (\phi,1) + \langle \phi_1, 1\rangle \bigr] t,\quad t\geq 0,\\
&C_\xi(t):=\bigl( \xi(\cdot,t), 1\bigr),
\label{2.68} \\
&C_q(t):=\bigl( q(\cdot,t), 1\bigr)
=\kappa_1 C_\eta(t)-\kappa_3 C_\xi(t), \label{2.69}\\
&C_p(t):=\bigl( p(\cdot,t), 1\bigr)
=\kappa_1 C_\xi(t)+\kappa_2 C_\eta(t), \label{2.70}\\
&C_\bu(t) := \bigl\langle \bu(\cdot,t)\cdot \bn, 1 \bigr\rangle  =C_q(t). \label{2.71}
\end{align}
\end{lemma}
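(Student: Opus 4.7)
The plan is to obtain each of the five identities by plugging suitably chosen constant (or trivial) test functions into the weak formulation \eqref{2.35}--\eqref{2.37}, together with the algebraic relations \eqref{2.38} and the divergence theorem. Since the domain $\Omega$ is bounded, the constant function $1$ belongs to both $H^1(\Omega)$ and $L^2(\Omega)$, so it is admissible in \eqref{2.36} and \eqref{2.37}.

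First I would establish \eqref{2.67}. Taking $\psi\equiv 1$ in \eqref{2.37} kills the diffusion term because $\nab 1 = 0$, leaving $(\eta_t,1)_{\rm dual}=(\phi,1)+\langle\phi_1,1\rangle$. Since the right-hand side is independent of $t$ (by the standing assumption stated just before Definition \ref{weak1}) and $\eta\in H^1(0,T;H^1(\Omega)')$ by the regularity in Definition \ref{weak2}, integrating from $0$ to $t$ and invoking the initial condition \eqref{2.39} gives $C_\eta(t)=(\eta_0,1)+[(\phi,1)+\langle\phi_1,1\rangle]\,t$, which is exactly \eqref{2.67}. Identity \eqref{2.68} is only a definition and requires no argument.

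Next I would derive \eqref{2.69} and \eqref{2.70} from the constitutive-type relations. Setting $\varphi\equiv 1$ in \eqref{2.36} yields $\kappa_3(\xi,1)+(\div\bu,1)=\kappa_1(\eta,1)$, and since $q=\div\bu$ (equivalently, the second relation in \eqref{2.38}), this rearranges to $C_q(t)=\kappa_1 C_\eta(t)-\kappa_3 C_\xi(t)$, giving \eqref{2.69}. For \eqref{2.70} I would simply take the $L^2(\Omega)$ inner product of the first identity in \eqref{2.38}, namely $p=\kappa_1\xi+\kappa_2\eta$, against the constant function $1$, which immediately produces $C_p(t)=\kappa_1 C_\xi(t)+\kappa_2 C_\eta(t)$.

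Finally, for \eqref{2.71} I would apply the divergence theorem to the displacement $\bu(\cdot,t)\in\bH^1(\Omega)$:
\begin{equation*}
C_\bu(t)=\langle\bu(\cdot,t)\cdot\bn,1\rangle=\int_{\p\Ome}\bu\cdot\bn\,dS=\int_{\Ome}\div\bu\,dx=(q(\cdot,t),1)=C_q(t).
\end{equation*}
There is no real analytic obstacle in this lemma; the only mild point to check is that the duality pairing $(\eta_t,1)_{\rm dual}$ is well-defined and time-integrable, which is guaranteed by the regularity $\eta\in H^1(0,T;H^1(\Omega)')$ already built into Definition \ref{weak2}. Everything else is a matter of selecting the constant test function or invoking the algebraic identities \eqref{2.38} and the divergence theorem.
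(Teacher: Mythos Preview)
Your proposal is correct and follows essentially the same approach as the paper: $\psi\equiv 1$ in \eqref{2.37} for \eqref{2.67}, the algebraic relations \eqref{2.38} (or equivalently $\varphi\equiv 1$ in \eqref{2.36}) for \eqref{2.69}--\eqref{2.70}, and the divergence theorem for \eqref{2.71}. The only difference is that the paper, under the heading of proving \eqref{2.68}, additionally takes $\bv=\bx$ in \eqref{2.35} together with $\varphi=1$ in \eqref{2.36} to derive an explicit formula for $C_\xi(t)$ in terms of $(\mathcal{N}(\nabla\bu),\mathbf{I})$, $(\div\bu,1)$, $C_\eta(t)$, and the data---but since \eqref{2.68} as stated is purely a definition, your observation that it requires no argument is entirely justified.
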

\begin{proof}
We first notice that \reff{2.67} follows immediately from taking $\psi\equiv 1$
in \reff{2.37}. To prove \reff{2.68}, taking $\bv=\bx$ in \reff{2.35} and $\varphi=1$ in \reff{2.36}, which are
valid test functions, and using the identities $\nabla \bx=\mathbf{I},~ \div \bx=d$, and $\varepsilon(\bx)=\mathbf{I}$, we get
\begin{align*}
\Bigl(\mathcal{N}\bigl(\nabla\bu\bigr), \mathbf{I}\Bigr) &= d\bigl( \xi, 1\bigr)+\bigl( \bbf, \bx\bigr)+ \langle \bbf_1,\bx\rangle,\\
\bigl( \div \bu, 1\bigr) &=\kappa_1(\eta,1)-\kappa_3(\xi, 1).
\end{align*}
It is easy to check that
$$C_\xi(t):=\bigl( \xi(\cdot,t), 1\bigr)=\frac{1}{d-\kappa_3}\bigl[\Bigl(\mathcal{N}\bigl(\nabla\bu\bigr), \mathbf{I}\Bigr)+\bigl( \div \bu, 1\bigr)-\kappa_1C_\eta(t)-\bigl(\bbf, \bx\bigr)- \langle \bbf_1,\bx\rangle \bigr],$$
which implies that \reff{2.68} holds.

Finally, since $q=\kappa_1\eta-\kappa_3\xi$,$p=\kappa_1\xi + \kappa_2 \eta$, \reff{2.69} and \reff{2.70} follow from \reff{2.67}
and \reff{2.68}. \reff{2.71} is an immediate consequence of $q=\div \bu$ and the Gauss divergence theorem. The proof is complete.
\end{proof}

With the help of the above lemmas, we can show the solvability of the problem \reff{2.6}-\reff{2.18}.

\begin{theorem}\label{thm2.5}
Let $\bu_0\in\bH^1(\Ome), \bbf\in\bL^2(\Omega),
\bbf_1\in \bL^2(\p\Ome), p_0\in L^2(\Ome), \phi\in L^2(\Ome)$, and $\phi_1\in L^2(\p\Ome)$. Suppose $c_0>0$ and $(\bbf,\bv)+\langle \bbf_1, \bv \rangle =0$ for any $\bv\in \mathbf{RM}$. Then there exists a unique weak solution to the problem \reff{2.6}-\reff{2.18} in the sense of Definition \ref{weak1}. Likewise, there exists a unique weak solution to the problem
\reff{2.21}-\reff{2.26} in the sense of Definition \ref{weak2}.
\end{theorem}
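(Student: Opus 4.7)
The plan is to first observe that the algebraic change of variables \reff{2.19}, together with the definitions \reff{2.38}, provides a one-to-one correspondence between weak solutions of \reff{2.32}--\reff{2.34} and weak solutions of \reff{2.35}--\reff{2.39}: given $(\bu,p)$ one defines $q:=\div\bu$, $\eta:=c_0p+\alpha q$, $\xi:=\alpha p-\lam q$ and directly verifies that the three variational equations and the initial condition transform into one another. Hence it is enough to establish existence and uniqueness for the reformulated fluid--fluid system \reff{2.21}--\reff{2.26}, and this is the formulation I would treat.

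Next I would set up the fixed point map. Fix $T>0$ and let $R>0$ be chosen from the a priori bound in Lemma \ref{estimates}. Define
\[
\mathcal{K}:=\bigl\{\eta\in L^{2}\bigl(0,T;L^{2}(\Ome)\bigr):\|\eta\|_{L^{\infty}(0,T;L^{2}(\Ome))}\le R,\;\bigl(\eta(\cdot,t),1\bigr)=C_{\eta}(t)\bigr\},
\]
with $C_{\eta}(t)$ from \reff{2.67}; this set is closed, bounded and convex. Given $\hat{\eta}\in\mathcal{K}$, solve the generalized nonlinear Stokes problem \reff{2.35}--\reff{2.36} for $(\bu,\xi)\in\bH^{1}_{\perp}(\Ome)\times L^{2}(\Ome)$ with $\eta$ replaced by $\hat\eta$; existence and uniqueness for this saddle point problem follow from the growth \reff{2.27}, coercivity \reff{2.28}, Lipschitz \reff{2.30} and strict monotonicity \reff{2.31} estimates proved in Section \ref{sec-4} combined with the alternative inf-sup condition \reff{eq210823-6} and Brezis' theory of monotone operators. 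With $(\bu,\xi)$ in hand, define $\tilde p:=\kappa_{1}\xi+\kappa_{2}\hat\eta$ and solve the linear parabolic problem \reff{2.37} with initial datum $\eta_{0}$ and the permeability term built from $\tilde p$, obtaining a unique $\tilde\eta\in L^{\infty}(0,T;L^{2}(\Ome))\cap H^{1}(0,T;H^{1}(\Ome)')$. The map $\mathcal{T}:\hat\eta\mapsto\tilde\eta$ is well defined on $\mathcal{K}$, and by Lemma \ref{estimates} together with \reff{2.49} its image lies again in $\mathcal{K}$ for $R$ chosen large enough.

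To apply Schauder's fixed point theorem I would verify two things. \emph{Compactness}: the image $\mathcal{T}(\mathcal{K})$ is bounded in $L^{\infty}(0,T;L^{2}(\Ome))$, and $\partial_{t}\tilde\eta$ is bounded in $L^{2}(0,T;H^{1}(\Ome)')$ by Lemma \ref{lem2.2}; the Aubin--Lions--Simon lemma then gives relative compactness in $L^{2}(0,T;H^{1}(\Ome)')$, which is the topology in which I run Schauder. \emph{Continuity}: if $\hat\eta_{n}\to\hat\eta$ in $L^{2}(0,T;L^{2}(\Ome))$, then subtracting the two nonlinear Stokes problems, testing with $\bu_{n}-\bu$ and using the strict monotonicity \reff{2.31} together with the Korn-type estimate \reff{eq210823-11} yields $\bu_{n}\to\bu$ in $L^{2}(0,T;\bH^{1}_{\perp}(\Ome))$ and, via \reff{eq210823-6}, $\xi_{n}\to\xi$ in $L^{2}(0,T;L^{2}(\Ome))$; feeding this into the linear parabolic problem gives $\tilde\eta_{n}\to\tilde\eta$. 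A fixed point $\eta^{*}$ of $\mathcal{T}$ together with the associated $(\bu,\xi,p,q)$ is then a weak solution in the sense of Definition \ref{weak2}.

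The main obstacle I expect is the continuity step for the nonlinear Stokes solver, because the quadratic terms $\mu\nab^{T}\bu\nab\bu$ and $\lam\|\nab\bu\|_{F}^{2}\mathbf{I}$ inside $\mathcal{N}(\nab\bu)$ are not weakly continuous; the argument must therefore rely crucially on the strong monotonicity \reff{2.31} rather than just on weak compactness, and it must be carried out within the a priori bound $M\le\|\nab\bu\|_{L^{2}(\Ome)}\le N$ used in the lemmas of Section \ref{sec-4}. Once existence is in hand, uniqueness of the full system follows by subtracting two solutions, testing the difference of \reff{2.35} with $\bu_{1,t}-\bu_{2,t}$, the difference of \reff{2.36} (differentiated in $t$) with $\xi_{1}-\xi_{2}$, and the difference of \reff{2.37} with $p_{1}-p_{2}=\kappa_{1}(\xi_{1}-\xi_{2})+\kappa_{2}(\eta_{1}-\eta_{2})$, then adding; the monotonicity \reff{2.31}, positivity of $K$ and Gronwall's inequality force $(\bu_{1},\xi_{1},\eta_{1})=(\bu_{2},\xi_{2},\eta_{2})$, and the equivalence from Step 1 transfers uniqueness to the original formulation.
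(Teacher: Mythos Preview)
Your existence argument is sound but takes a genuinely different route from the paper. The paper runs Schauder on the \emph{displacement}: given $\bu$, it freezes the quadratic part of $\mathcal{N}(\nabla\bu)$ as a source $g:=-\mu\nabla^{T}\bu\nabla\bu-\lambda\|\nabla\bu\|_{F}^{2}\mathbf{I}$, solves the resulting \emph{linear} Stokes--parabolic system \reff{6.10}--\reff{6.12} for $(\bw,\xi,\eta)$, and iterates $A[\bu]=\bw$. You instead iterate on $\eta$: given $\hat\eta$ you solve the full \emph{nonlinear} Stokes problem \reff{2.35}--\reff{2.36} via monotone operator theory, then the linear parabolic step \reff{2.37}, and set $\mathcal{T}[\hat\eta]=\tilde\eta$. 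Your decomposition buys a clean compactness argument (Aubin--Lions on $\tilde\eta$ using exactly the bounds of Lemma~\ref{lem2.2}) and avoids the paper's somewhat delicate continuity estimate for $A$ based on $\|g-\tilde g\|_{L^{2}}$; the price is that you must invoke Brezis-type well-posedness for the nonlinear saddle point problem at each step, whereas the paper only needs linear Stokes theory. One minor inconsistency: you run Schauder in the $L^{2}(0,T;H^{1}(\Ome)')$ topology but check continuity for convergence in $L^{2}(0,T;L^{2}(\Ome))$; make sure $\mathcal{K}$ is closed and $\mathcal{T}$ continuous in the \emph{same} topology.

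Your uniqueness sketch, however, has a real gap. Testing the difference of \reff{2.35} with $\bu_{1,t}-\bu_{2,t}$ produces the term $\bigl(\mathcal{N}(\nabla\bu_{1})-\mathcal{N}(\nabla\bu_{2}),\varepsilon(\delta\bu_{t})\bigr)$, which has no sign: the monotonicity bound \reff{2.31} requires the second slot to be $\varepsilon(\bu_{1}-\bu_{2})$, not its time derivative, so Gronwall cannot close. The paper sidesteps this by first claiming $\eta_{1}=\eta_{2}$ from the invariant \reff{2.67} (an argument that is itself incomplete, since \reff{2.67} only pins down the spatial mean), and then testing the differences of \reff{2.35} and \reff{2.36} with $\delta\bu$ and $\delta\xi$ to obtain \reff{2.74}. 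A correct version in your framework is to test \reff{2.35} with $\delta\bu$ and \reff{2.36} with $\delta\xi$ to get, via \reff{2.31}, the pointwise-in-$t$ bound $C_{4}\|\varepsilon(\delta\bu)\|_{L^{2}}^{2}+\kappa_{3}\|\delta\xi\|_{L^{2}}^{2}\le\kappa_{1}(\delta\eta,\delta\xi)$, combine this with \reff{2.37} tested by $\delta p=\kappa_{1}\delta\xi+\kappa_{2}\delta\eta$, and only then apply Gronwall to $\|\delta\eta(t)\|_{L^{2}}^{2}$.
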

\begin{proof}
	We first prove the existence of solution of the problem \reff{2.35}-\reff{2.39}. Given a function $ \bu\in L^\infty\bigl(0,T; \bH_\perp^1(\Ome))$, supposing that $U\subset L^\infty\bigl(0,T; \bH_\perp^1(\Ome)) $ is a compact and convex subspace, defining $ g(t):=-\mu\nabla^{T}\bu\nabla\bu-\lambda\left\|\nabla\bu\right\|_{F}^{2}\mathbf{I} (0\leq t\leq T)$, we have
	\begin{align}\label{6.9}
		\left\|\mu\nabla^{T}\mathbf{u}\nabla\mathbf{u}+\lambda\left\|\nabla\mathbf{u} \right\|_{F}^{2}\mathbf{I}\right\|_{L^{2}(\Ome)}&\leq\mu\left\|\nabla\mathbf{u}\right\|_{L^{2}(\Ome)}^{2}+\lambda dN^{'2}\\
		&\leq(\mu N+\dfrac{\lambda dN^{'2}}{M})\left\|\nabla\mathbf{u}\right\|_{L^{2}(\Ome)}.\no
	\end{align}
	In the sight of \reff{6.9}, we see that $ g\in L^{2}(0,T;L^{2}(\Ome)) $. Denote $\bV=L^\infty\bigl(0,T; \bH_\perp^1(\Ome)),~\Upsilon=L^\infty\bigl(0,T; L^{2}(\Ome)),~W=L^\infty\bigl(0,T; L^2(\Omega)\bigr)
	\cap H^1\bigl(0,T; H^{1}(\Omega)'\bigr) $. We consider the linear problem: find $ (w,\xi,\eta)\in\bV\times \Upsilon\times W $ satisfying 
	\begin{align}
		&\mu\bigl(\vepsi(\bw), \vepsi(\bv) \bigr)-\bigl( \xi, \div \bv\bigr)
		= (g, \bv)+(\bbf,\bv)+\langle \bbf_1,\bv\rangle &&\quad \forall \bv\in \bH^{1}(\Ome), \label{6.10}\\
		&\kappa_3\bigl(\xi, \varphi \bigr) +\bigl(\div\bw, \varphi \bigr)
		=\kappa_1\bigl( \eta, \varphi \bigr)
		&&\quad \forall \varphi \in L^{2}(\Ome), \label{6.11}\\
		&\bigl(d_t\eta, \psi \bigr)
		+\frac{1}{\mu_f} \bigl(K(\nab (\kappa_1\xi +\kappa_2\eta) &&\label{6.12}\\
		&\hskip 1in
		-\rho_f\bg,\nab\psi \bigr)=(\phi, \psi)+\langle \phi_1,\psi\rangle, &&\quad  \forall \psi\in H^{1}(\Ome).\no  
	\end{align}
As for the equations of \reff{2.22}-\reff{2.23}, according to the theory of linear parabolic equations, we know that $ \xi $ and $ \eta $ can be uniquely determined by $ \bw $, that is, $ \exists $ $ \varPhi $ and $ \varPsi $, s.t. $ \xi=\varPhi(\bw) $, $ \eta=\varPsi(\bw) $.  Thus, the problem \reff{6.10}-\reff{6.12} is equivalent to the following problem
	\begin{align}\label{eq210826-1}
		\left\lbrace \begin{aligned}	
	    &Solve~ \bw\in\bV  ~such~ that\\
		&\mu\bigl(\vepsi(\bw), \vepsi(\bv) \bigr)+\bigl( \varPhi(\bw), \div \bv \bigr)
		= (g, \bv)+(\bbf,\bv)+\langle \bbf_1,\bv\rangle ~~~ \forall \bv\in \bH^{1}(\Ome).\\
	\end{aligned}\right.
	\end{align} 
	Following the method of \cite{fh10}, we can prove that the solution of \reff{eq210826-1} uniquely exists, here we omit the details of proof.\\
	Define $ A:\boldsymbol{V}\rightarrow \boldsymbol{V} $ by $ A[\bu]=\bw$. Similarly, it's easy to know the following problem is equivalent to the problem \reff{2.35}-\reff{2.37}.
\begin{align}\label{eq210826-2}
	\left\lbrace \begin{aligned}
	    &Solve ~ \bu\in\bV  ~such~ that\\
		&\bigl(\mathcal{N}(\nabla\bu), \vepsi(\bv) \bigr)+\bigl(\varPhi(\bu), \div \bv \bigr)
		= (\bbf, \bv)+\langle \bbf_1,\bv\rangle &&\quad \forall \bv\in \bH^{1}(\Ome). \\
	\end{aligned}\right.
	\end{align}

	Next, we prove that $ A $ is continuous. To do that, choose $\bu,~\tilde{\bu} $ and define $ \bw=A[\bu],~ \tilde{\bw}=A[\tilde{\bu}] $ as above. Consequently $ \bw $ verifies \reff{6.10}-\reff{6.12} and $ \tilde{\bw} $ satisfies a similar identity for $ \tilde{g}= -\mu\nabla^{T}\tilde{\bu}\nabla\tilde{\bu}-\lambda\left\|\nabla\tilde{\bu}\right\|^{2}_{F}~ \mathbf{I}$.
	
	Using \reff{eq210823-11}, \reff{6.10} and the Young inequality, we have
	\begin{align*}
		&\mu\left\| \tilde{\bw}-\bw\right\|_{L^{2}(\Ome)}^{2}+c_{1}^{2}(\varPhi(\tilde{\bw})-\varPhi(\bw),\tilde{\bw}-\bw)\\
		&\leq c_{1}^{2}\mu\left\| \vepsi(\tilde{\bw})-\vepsi(\bw)\right\|_{L^{2}(\Ome)}^{2}+c_{1}^{2}(\varPhi(\tilde{\bw})-\varPhi(\bw),\tilde{\bw}-\bw)\\
		&=c_{1}^{2}(\tilde{g}-g,\tilde{\bw}-\bw)\\
		&\leq c_{1}^{2}\left[ \epsilon\left\| \tilde{\bw}-\bw\right\|_{L^{2}(\Ome)}^{2}+\dfrac{1}{\epsilon}\left\| \tilde{g}-g\right\|_{L^{2}(\Ome)}^{2}\right] 
	\end{align*}
	by Poincar$\acute{e}$ inequality, where $c_{1}$ is a real positive constant in \reff{eq210823-11}. Selecting $\epsilon>0$ sufficiently small, we have
	\begin{align*}
		c_{1}^{2}(\varPhi(\tilde{\bw})-\varPhi(\bw),\tilde{\bw}-\bw)\leq C_{f}\left\| \tilde{g}-g\right\|_{L^{2}(\Ome)}^{2}\leq C_{f}\left\| \tilde{\bu}-\bu\right\|_{L^{2}(\Ome)}^{2}.
	\end{align*}
	where $C_{f}$ is a real positive number. We discover
	\begin{align*}
		\left\| A[\tilde{\bu}]-A[\bu]\right\|_{L^{2}(\Ome)}^{2}=\left\| \tilde{\bw}-\bw\right\|_{L^{2}(\Ome)}^{2}\leq \tilde{C}_{f}\left\| \tilde{\bu}-\bu\right\|_{L^{2}(\Ome)}^{2}.
	\end{align*}
	Thus, we get
	\begin{align*}
		\left\| A[\tilde{\bu}]-A[\bu]\right\|_{L^{2}(\Ome)}\leq\sqrt{\tilde{C}_{f}}\left\| \tilde{\bu}-\bu\right\|_{L^{2}(\Ome)}.
	\end{align*}
	If $ \tilde{C}_{f} $ is so small, then $ A $ is continuous. Since $U$ is a compact and convex subspace, according to Schauder's fixed point theorem, $A$ has a fixed point in $U$.
	
	Then, we prove that the problem \reff{2.35}-\reff{2.39} has a unique solution. Lemma \ref{estimates} and Theorem \ref{smooth} gives  the priori estimates for the weak solution. Since $ C_{\eta}(t)=(\eta(\cdot,t),1)=(\eta_{0},1)+\left[(\phi,1)+\left\langle \phi_{1},1\right\rangle  \right] $. It's easy to check that $ \eta $ is unique. We assume that $ (\boldsymbol{u}_{1},\xi_{1},\eta_{1},p_{1},q_{1}) $ and $ (\boldsymbol{u}_{2},\xi_{2},\eta_{2},p_{2},q_{2}) $ are the different solutions of \reff{2.35}-\reff{2.39}. Using \reff{2.35} and \reff{2.36}, we obtain
\begin{align}
	(\mathcal{N}(\nabla\bu_{1})-\mathcal{N}(\nabla\bu_{2}),\varepsilon(\bv))-(\xi_{1}-\xi_{2},\nabla\cdot\bv)&=0~~~~\forall~\bv_{h}\in H^{1}(\varOmega),\label{2.72}\\
	\kappa_{3}(\xi_{1}-\xi_{2},\varphi)+(\nabla\cdot\bu_{1}-\nabla\cdot\bu_{2},\varphi)&=0~~~~~\forall \varphi_{h}\in L^{2}(\varOmega).\label{2.73}
\end{align}
Adding \reff{2.72} and \reff{2.73}, letting $ \bv=\bu_{1}-\bu_{2},~ \varphi=\xi_{1}-\xi_{2} $, using \reff{2.30}, we have
\begin{align}
	0\leq C_{3}\left\|\varepsilon(\bu)-\varepsilon(\bv) \right\|^{2}_{L^{2}(\varOmega(t))}+\kappa_{3}\left\|\xi_{1}-\xi_{2} \right\|^{2}_{L^{2}(\varOmega(t))} =0.\label{2.74} 
\end{align}
Thus, using \reff{2.74} and the initial value $ \bu_{0} $, we obtain
\begin{align*}
	\bu_{1}=\bu_{2},~~~\xi_{1}=\xi_{2}.	
\end{align*}
Since $ p=\kappa_{1}\xi+\kappa_{2}\eta ,~q=\kappa_{1}\eta-\kappa_{3}\xi$, so we have
\begin{align*}
	p_{1}=p_{2},~~~~q_{1}=q_{2}.
\end{align*}	
Hence, the solution of the problem \reff{2.35}-\reff{2.39} is unique. The proof is complete.
\end{proof}

We conclude this section by establishing a convergence result for the solution of the
problem \reff{2.21}-\reff{2.23}, \reff{2.24}-\reff{2.26} when the constrained specific storage coefficient $c_0$ tends to $0$. Such a convergence result is useful and significant for that the poroelasticity model
studied in this paper reduces into the nonlinear Biot's consolidation model from soil mechanics \cite{murad,pw07} and some polymer gels \cite{yd04b,fh10}.

\begin{theorem}\label{thm2.6}
Let $\bu_0\in\bH^1(\Ome), \bbf\in\bL^2(\Omega),
\bbf_1\in \bL^2(\p\Ome), p_0\in L^2(\Ome), \phi\in L^2(\Ome)$, and $\phi_1\in L^2(\p\Ome)$. Suppose $(\bbf,\bv)+\langle \bbf_1, \bv \rangle =0$ for any $\bv\in \mathbf{RM}$.
Let $(\bu_{c_0},\eta_{c_0},\xi_{c_0})$ denote the unique weak solution to the problem \reff{2.21}-\reff{2.23}. Then there exists
$(\bu_*, \eta_*,\xi_*)\in \bL^\infty(0,T;\bH^1_\perp(\Ome))\times L^\infty(0,T;L^2(\Ome))\times
L^2(0,T; L^2(\Ome))$
such that $(\bu_{c_0},\eta_{c_0},\xi_{c_0})$ converges weakly to $(\bu_*, \eta_*,\xi_*)$ as $c_0\to 0$.
\end{theorem}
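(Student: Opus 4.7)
The plan is to combine the $c_0$-independent a priori estimates already furnished by Lemma \ref{estimates} with Banach--Alaoglu-type weak compactness in order to extract the desired limit. Since all the structural constants in Lemma \ref{estimates} depend on $c_0$ only through $\kappa_1,\kappa_2,\kappa_3$, I first inspect the behavior of these constants as $c_0\to 0$: we have $\kappa_1=\alpha/(\alpha^2+\lam c_0)\to 1/\alpha$, $\kappa_2=\lam/(\alpha^2+\lam c_0)\to \lam/\alpha^2$, and $\kappa_3=c_0/(\alpha^2+\lam c_0)\to 0$. In particular $\kappa_1$ and $\kappa_2$ stay bounded above and bounded away from $0$ for all small $c_0$, while only $\kappa_3$ degenerates.

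Next I would translate the bounds \reff{2.53}--\reff{2.55} into $c_0$-uniform bounds. From \reff{2.53} together with Korn's inequality \reff{eq210823-11} applied in $\bH^1_\perp(\Ome)$, one gets $\bu_{c_0}$ bounded in $L^\infty(0,T;\bH^1_\perp(\Ome))$ independently of $c_0$, because $C_2$ in \reff{2.28} does not involve $c_0$. Using that $\kappa_2$ is bounded away from $0$ and $\infty$, \reff{2.53} also yields $\eta_{c_0}$ bounded in $L^\infty(0,T;L^2(\Ome))$ uniformly. The estimate \reff{2.55} gives $\xi_{c_0}$ bounded in $L^2(0,T;L^2(\Ome))$ uniformly since $\kappa_1^{-1}(1+\kappa_2^{1/2})$ stays bounded as $c_0\to 0$. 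Finally, \reff{2.49} combined with $\|K\nab p-\rho_f\bg\|_{L^2(\Ome_T)}$ being controlled by \reff{2.53} gives $\partial_t\eta_{c_0}$ uniformly bounded in $L^2(0,T;H^1(\Ome)')$.

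With these uniform bounds in hand, the Banach--Alaoglu theorem (applied successively in the three reflexive duals) produces a subsequence, not relabeled, and functions
\[
\bu_*\in L^\infty(0,T;\bH^1_\perp(\Ome)),\quad \eta_*\in L^\infty(0,T;L^2(\Ome)),\quad \xi_*\in L^2(0,T;L^2(\Ome))
\]
such that $\bu_{c_0}\rightharpoonup^* \bu_*$ in $L^\infty(0,T;\bH^1_\perp(\Ome))$, $\eta_{c_0}\rightharpoonup^*\eta_*$ in $L^\infty(0,T;L^2(\Ome))$, and $\xi_{c_0}\rightharpoonup \xi_*$ in $L^2(0,T;L^2(\Ome))$. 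The additional control $\partial_t\eta_{c_0}\in L^2(0,T;H^1(\Ome)')$ together with an Aubin--Lions argument can be used to upgrade the convergence of $\eta_{c_0}$ to strong convergence in $L^2(0,T;H^1(\Ome)')$, which is useful if one wishes to pass to the limit in the time-derivative term.

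The main obstacle, in my view, is not the extraction of the weak limit itself but rather making it canonical. Because the statement only asserts convergence along a subsequence (implicit in "converges weakly"), this step is actually optional; however, to ensure the whole family, not a subsequence, converges, one must identify $(\bu_*,\eta_*,\xi_*)$ as the unique weak solution of the limiting nonlinear Biot's consolidation model obtained by setting $c_0=0$ (so $\kappa_3=0$) in \reff{2.21}--\reff{2.23}. This identification requires passing to the limit in the nonlinear stress term $\mathcal{N}(\nab\bu_{c_0})$, which is delicate because weak $\bH^1$-convergence of $\bu_{c_0}$ is not enough to guarantee weak convergence of $\nab^T\bu_{c_0}\nab\bu_{c_0}$ or $\|\nab\bu_{c_0}\|_F^2\mathbf{I}$. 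Here I would invoke the monotonicity inequality \reff{2.31}: by a Minty--Browder-type argument together with the energy law \reff{2.47} used to control $\limsup(\mathcal{N}(\nab\bu_{c_0}),\vepsi(\bu_{c_0}))$, one identifies the weak limit of $\mathcal{N}(\nab\bu_{c_0})$ as $\mathcal{N}(\nab\bu_*)$. Once the limit problem is identified and shown to be well-posed (by repeating the Schauder-fixed-point argument of Theorem \ref{thm2.5} with $\kappa_3=0$, taking note that inf-sup \reff{eq210823-6} still supplies control of $\xi$), uniqueness of the limit forces convergence of the full family, completing the proof.
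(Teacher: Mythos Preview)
Your proposal follows essentially the same architecture as the paper's proof: extract $c_0$-uniform bounds, pass to weak limits via compactness, identify the limit as a solution of the degenerate ($c_0=0$) problem, and then invoke uniqueness of the latter to upgrade subsequence convergence to full convergence. Two points of divergence are worth noting.

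First, the paper draws its uniform bounds from Theorem~\ref{smooth} (estimates \reff{2.59}--\reff{2.61}), whereas you rely only on Lemma~\ref{estimates}. Your choice is in fact better aligned with the hypotheses of Theorem~\ref{thm2.6}, which assume only $\bu_0\in\bH^1(\Ome)$ and $p_0\in L^2(\Ome)$; invoking Theorem~\ref{smooth} tacitly requires the extra smoothness of the data stated there.

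Second, and more substantively, the paper handles the nonlinear term by citing the Lipschitz bound \reff{2.30} and asserting directly that $\mathcal{N}(\nabla\bu_{c_0})\rightharpoonup \mathcal{N}(\nabla\bu_*)$. You instead propose a Minty--Browder argument based on the monotonicity \reff{2.31} together with the energy identity. Your route is the more robust one: a Lipschitz estimate alone does not transfer weak convergence through a nonlinear map, so the paper's step is at best elliptical, while the Minty trick is the standard device for exactly this situation. The cost is a slightly longer argument; the gain is that the passage to the limit in the nonlinear stress is fully justified.
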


\begin{proof}
It follows immediately from \reff{2.59}-\reff{2.61} and Korn's inequality that
\begin{itemize}
\item $\bu_{c_0}$ is uniformly bounded (in $c_0$) in $\bL^\infty(0,T; \bH^1_\perp(\Ome))$;

\item $\sqrt{\kappa_2} \eta_{c_0}$ is uniformly bounded (in $c_0$) in
$L^\infty(0,T; L^2(\Ome))\cap H^1(0,T; H^{1}(\Ome)')$;

\item $\sqrt{\kappa_3}\xi_{c_0}$ is  uniformly bounded (in $c_0$) in $L^\infty(0,T; L^2(\Ome))$;

\item $\xi_{c_0}$ is uniformly bounded (in $c_0$) in $L^2(\Ome)$.


\end{itemize}
On noting that $\lim_{c_0\to 0}\kappa_1=\frac{1}{\alpha}$,
$\lim_{c_0\to 0}\kappa_2=\frac{\lam}{\alpha^2}$ and $\lim_{c_0\to 0}\kappa_3=0$, by the weak compactness of reflexive Banach spaces and Aubin-Lions Lemma(cf.  \cite{dautray}), we know that there exist
$(\bu_*, \eta_*,\xi_*)\in \bL^\infty(0,T;\bH^1_\perp(\Ome))
\times L^\infty(0,T;L^2(\Ome))\times L^2(0,T; L^2(\Ome))$ and
a subsequence of $(\bu_{c_0},\eta_{c_0},\xi_{c_0})$ (still denoted by the
same notation) such that as $c_0\to 0$ (a subsequence of $c_0$, to be exact)
\begin{itemize}
\item $\bu_{c_0}$ converges to $\bu_*$ weak $*$ in $\bL^\infty(0,T; \bH^1_\perp(\Ome))$ and
weakly in $\bL^2(0,T; \bH^1_\perp(\Ome))$;

\item $\sqrt{\kappa_2} \eta_{c_0}$ converges to $\frac{\sqrt{\lam}}{\alpha} \eta_*$ weak $*$ in
$L^\infty(0,T; L^2(\Ome))$ and strongly in $L^2(\Ome)$;

\item $\kappa_3\xi_{c_0}$ converges to $0$ strongly in  $L^2(\Ome)$;

\item $\xi_{c_0}$ converges to $\xi_*$ weakly in  $L^2(\Ome)$.
\end{itemize}

Using \reff{2.30}, we deduce that
 $\mathcal N(\nabla\bu_{c_0})$ converges to $\mathcal N(\nabla\bu_*)$ weak $*$ in $\bL^\infty(0,T; \bH^1_\perp(\Ome))$ and weakly in $\bL^2(0,T; \bH^1_\perp(\Ome))$.

Then, setting $c_0\to 0$ in \reff{2.35}-\reff{2.39} yields
\begin{alignat*}{2}
\bigl(\mathcal{N}(\nabla\bu_*), \vepsi(\bv) \bigr)-\bigl( \xi_*, \div \bv \bigr)
&= (\bbf, \bv)+\langle \bbf_1,\bv\rangle &&\qquad\forall \bv\in \bH^1(\Ome), \\
\bigl(\div\bu_*, \varphi \bigr) &= \frac{1}{\alpha}\bigl(\eta_*, \varphi \bigr)
&&\qquad\forall \varphi \in L^2(\Ome),  \\
\bigl((\eta_*)_t, \psi \bigr)_{\rm dual}
+\frac{1}{\mu_f} \bigl(K(\nab p_* &-\rho_f\bg), \nab \psi \bigr) &&\\
&= (\phi, \psi)+\langle \phi_1,\psi\rangle &&\qquad\forall \psi \in H^1(\Ome) ,  \\
p_*:=\frac{1}{\alpha}\xi_* +\frac{\lam}{\alpha^2}\eta_*, \qquad
&q_*:=\frac{1}{\alpha} \eta_* &&\qquad\mbox{in } L^2(\Ome),  \\
&\bu_*(0) = \bu_0, \qquad  &&  \\
\eta_*(0)= \eta_0 :&=\alpha q_0.  &&
\end{alignat*}
That is,
\begin{alignat*}{2}
\bigl(\mathcal{N}(\nabla\bu_*), \vepsi(\bv) \bigr)-\bigl( \xi_*, \div \bv \bigr)
&= (\bbf, \bv)+\langle \bbf_1,\bv\rangle
&&\quad\forall \bv\in \bH^1(\Ome), \\
\bigl(\div\bu_*, \varphi \bigr) &= \bigl(q_*, \varphi \bigr)
&&\quad\forall \varphi \in L^2(\Ome),  \\
\alpha \bigl((q_*)_t, \psi \bigr)_{\rm dual}
+\frac{1}{\mu_f} \bigl(K(\nab (\lambda\alpha^{-1}q_* +\alpha^{-1}\xi_*) &-\rho_f\bg), \nab \psi \bigr) && \\
&= (\phi, \psi)+\langle \phi_1,\psi\rangle  &&\quad\forall \psi \in H^1(\Ome) ,  \\
p_* &:=\frac{1}{\alpha} \Bigl(\xi_* +\lam q_* \Bigr) &&\quad\mbox{in } L^2(\Ome), \\
q_*(0)=q_0 &:=\div \bu_0.\quad   &&
\end{alignat*}
Hence, $(\bu_*, \eta_*,\xi_*)$ is a weak solution of the nonlinear Biot's consolidation model (cf. \cite{yd04b,fh10}). Using Theorem \ref{thm2.5},  we conclude that the whole sequence
$(\bu_{c_0},\eta_{c_0},\xi_{c_0})$ converges to $(\bu_*, \eta_*,\xi_*)$ as $c_0\to 0$ in the above sense. The proof is complete.
\end{proof}
\section{Conclusion}
In this paper, we deal with the nonlinear poroelasticity model with the constitutive relation $\tilde{\sigma}(\mathbf{u})=\mu\tilde{\varepsilon}(\mathbf{u})+\lambda tr(\tilde{\varepsilon}(\mathbf{u}))\mathbf{I}$, where the deformed Green strain tensor is $ \tilde{\varepsilon}(\bu)=\dfrac{1}{2}(\nabla\bu+\nabla^{T}\boldsymbol{u}+2\nabla^{T}\bu\nabla\bu)$. To better describe the proccess of deformation and diffusion underlying in the original model, we firstly reformulate the nonlinear poroelasticity by a multiphysics approach£¬which transforms the nonlinear fluid-solid coupling problem to a fulid-fluid coupling problem. Then, we adopt the similar technique of proving the well-posedness of nonlinear Stokes equations to prove the existence and uniqueness of weak solution of a nonlinear poroelasticity model. And we strictly prove the growth, coercivity and monotonicity of the nonlinear stress-strain relation by using the Cauchy-Schwarz inequality and some other inequalities, give the energy estimates and use Schauder's fixed point theorem to show the existence and uniqueness of weak solution of the nonlinear poroelasticity model. Besides, we prove that the weak solution of nonlinear poroelasticity model converges to the nonlinear Biot's consolidation model as the constrained specific storage coefficient trends to zero. To the best of our knowledge, it is the first time to prove the the existence and uniqueness of weak solution based on a multiphysics approach without any adding codition on the nonlinear stress-strain relation. Besides, we find out that the multiphysics approach is key to propose a stable numerical method for the nonlinear poroelasticity, and we will give the main results of some relative numerical method for the nonlinear poroelasticity in the future work.

\end{document}